\def\D{\mathcal{D}}
\def\Pr{\mathbf{Pr}}
\def\Q{\mathcal{Q}}
\def\tr{\textrm}
\def\Vd{V_d}
\def\Vn{V_n}
\newtheorem{theorem}{Theorem}
\newtheorem{question}[theorem]{Question}
\newtheorem{lemma}[theorem]{Lemma}
\newtheorem{prop}[theorem]{Proposition}
\begin{document}
\title{Vertex Ramsey problems in the hypercube}
\author{John Goldwasser\thanks{Department of Mathematics, West Virginia University,  Morgantown, WV 26506-6310. Email: jgoldwas@math.wvu.edu} \and John Talbot\thanks{Department of Mathematics, University College London, WC1E 6BT, UK. 
Email: j.talbot@ucl.ac.uk.  This author is a Royal Society University Research Fellow.}}

\maketitle
\begin{abstract} If we 2-color the vertices of a large hypercube what monochromatic substructures are we guaranteed to find? Call a set $S$ of vertices from $\Q_d$, the $d$-dimensional hypercube, \emph{Ramsey} if any 2-coloring of the vertices of $\Q_n$, for $n$ sufficiently large, contains a monochromatic copy of $S$.  Ramsey's theorem tells us that for any $r\geq 1$ every 2-coloring of a sufficiently large $r$-uniform hypergraph will contain a large monochromatic clique (a complete subhypergraph): hence any set of vertices from $\Q_d$ that all have the same weight is Ramsey. A natural question to ask is: which sets $S$ corresponding to unions of cliques of different weights from $\Q_d$ are Ramsey?

The answer to this question depends on the number of cliques involved. In particular we determine which unions of 2 or 3 cliques are Ramsey and then show, using a probabilistic argument, that any non-trivial union of 39 or more cliques of different weights cannot be Ramsey.

A key tool is a lemma which reduces questions concerning monochromatic configurations in the hypercube to questions about monochromatic translates of sets of integers.

\end{abstract}
\section{Introduction}
Ramsey's theorem is a seminal result of extremal combinatorics. It implies that any $2$-coloring of a sufficiently large $r$-uniform hypergraph will contain a monochromatic copy of a complete subgraph of a given size \cite{Ram}.

The question we wish to address is: what types of monochromatic sets are unavoidable in any 2-coloring of the vertices of a large hypercube? Such sets are said to be \emph{Ramsey}. Since the set of vertices of weight $r$ in a hypercube correspond to a complete $r$-uniform hypergraph it is natural to ask whether  sets of vertices corresponding to unions of complete hypergraphs of different weights can be Ramsey. Our main results show that this can happen for some unions of two or three complete hypergraphs (Theorems \ref{2cliques:thm} and \ref{3cliques:thm}), but that it cannot occur for arbitrarily large unions (Theorem \ref{lll:thm}).

In the next section we give the required definitions and show that when considering which subsets of vertices of the hypercube are Ramsey we may restrict our attention to particularly simple ``layered'' colorings (Theorem \ref{layer:thm}).

As far as we are aware this paper is the first to consider Ramsey problems for the vertices of the hypercube. There is an extensive literature, however, on the corresponding problems for edge-colorings of the hypercube.

Chung \cite{Ch} showed that for all $k\geq 2$ and all $r\geq 1$, there exists $N$ such that if $n \geq N$, every edge-coloring of $\Q_n$ with $r$ colors contains a monochromatic copy of $C_{4k}$. Moreover she gave a 4-coloring of $\Q_n$ with no monochromatic copy of $C_6$, while Conder \cite{Con} found a 3-coloring with this property. 

Alon, Radoi\v{c}i\'{c}, Sudakov, and Vondr\'{a}k \cite{ARSV} extended this to show that for all $k\geq 2$ and all $r\geq 1$, there exists $N$ such that if $n \geq N$, every edge-coloring of $\Q_n$ with $r$ colors contains a monochromatic copy of $C_{4k+2}$. 

Axenovich and Martin \cite{AM} gave a $4$-coloring of the edges of $\Q_n$ containing no induced monochromatic copy of $C_{10}$.

So-called \emph{$d$-polychromatic colorings} have also been considered previously: these are edge colorings of the hypercube with $p$ colors so that every $d$-dimensional subcube contains every color. Alon, Krech and Szab\'o \cite{AKS} give upper and lower bounds for the maximum number of colors for which a $d$-polychromatic colorings exists. Their lower bound was later proved to be exact by Offner \cite{Off1}. They also considered $d$-polychromatic colorings for vertices of the hypercube. Recently Stanton and \"Ozkahya \cite{SO} have also considered some of the questions raised by Alon, Krech and Szab\'o.

Related Tur\'an-type problems for both edges and vertices of the hypercube, were also previously considered. Chung \cite{Ch} gave bounds on the density of edges required to guarantee a copy of $\Q_2$ and this was improved recently by Thomason and Wagner \cite{TW}. Chung also showed that any positive density of edges in a large hypercube guarantees a copy of $C_{4k}$, for $k\geq 2$. More recently this was extended to $C_{4k+2}$ ($k\geq 3)$ by F\"uredi and \"Ozkahya.  For a unified proof of the theorems of Chung, F\"uredi and \"Ozkahya, see Conlon \cite{DC}. 

The first Tur\'an-type result for vertices of the hypercube is due to E.A. Kostochka \cite{K} who showed that any subset of the vertices of the hypercube of density greater than $2/3$ will contain a copy of $\Q_2$. For related results see Johnson and Talbot \cite{JT}.
\section{Definitions and equivalences}
For $a,b\in \mathbb{N}$, $a<b$ we define $[a]=\{1,2,\ldots,a\}$ and $[a,b]=\{a,a+1,\ldots,b\}$.

 For $n\geq 1$ let $V_n=\{0,1\}^n$. The \emph{$n$-dimensional hypercube}, $\Q_n$, is the graph with vertex set $V_n$ and  edges between vertices that differ in exactly one coordinate.

If $1\leq d \leq n $ then an \emph{embedding} of $\Q_d$ into $\Q_n$ is an injective map $\psi :V_d\to V_n$ that preserves the edges of $\Q_d$. Note that the image of $V_d$ under any such embedding consists of $2^{d}$ elements of $V_n$ given by fixing $n-d$ coordinates and allowing the other $d$ coordinates to vary. We refer to the image of such an embedding as a ($d$-dimensional) \emph{subcube} of $\Q_n$. 

\begin{figure}
\begin{center}
\includegraphics[scale=0.35]{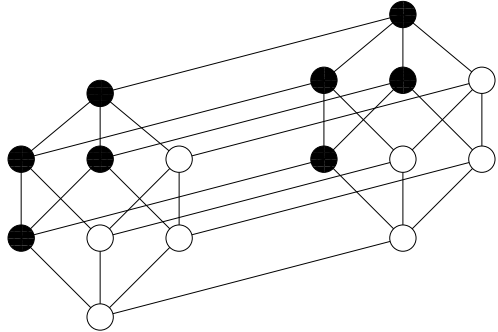}
\end{center}
\caption{A 3-dimensional subcube of $\Q_4$}
\end{figure}
 
Given $F\subseteq \Vd$ and $S\subseteq V_n$, with $1\leq d\leq n$, we say that $S$ \emph{contains a copy of $F$} if there exists an embedding $\psi :\Vd\to V_n$ satisfying $\psi(F)\subseteq S$.

For $t\geq 2$, a \emph{$t$-coloring} of $\Q_n$ is a map $c:V_n\to [t]$. A $t$-coloring of $\Q_n$ \emph{contains a monochromatic copy of $F$} if there is a color $i\in [t]$ such that $c^{-1}(i)$ contains a copy of $F$.

We say that a set $F\subseteq \Vd$ is \emph{$t$-Ramsey} if there exists $n_0(F,t)$ such that for all $n\geq n_0$, every $t$-coloring of $\Q_n$ contains a monochromatic copy of $F$.

For the remainder of this paper we will work with a different model of the hypercube: the \emph{Boolean lattice}, in which vertices of the hypercube  are identified with subsets of $[n]$. To be precise, if $2^{[n]}=\{A:A\subseteq [n]\}$ is the powerset of $[n]$, then the poset $(2^{[n]},\subseteq)$ has $\Q_n$ as its Hasse diagram. We identify $\Vn$ with $2^{[n]}$ via the natural isomorphism $s:V_n\to 2^{[n]}$, $s(x)=\{i:x_i=1\}$. 

We are interested in characterising those subsets of $\Vd$ which are $t$-Ramsey. The simplest example is given by Ramsey's theorem. For $a,t \geq 0$ a \emph{clique} of \emph{order} $t$ and \emph{weight} $a$ is a family consisting of all $a$-sets from a set of size $t$. Given a set $K$ with $|K|=t$ we denote this by $K^{(a)}$.
\begin{theorem}[Ramsey \cite{Ram}]\label{Ramsey:thm} For $t\geq 2$, all cliques are $t$-Ramsey.
\end{theorem}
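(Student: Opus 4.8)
The plan is to reduce the statement to the classical hypergraph form of Ramsey's theorem by restricting attention to a single weight layer of the Boolean lattice. Fix a clique $K^{(a)}$ whose order is $k$, so that $K^{(a)}$ consists of all $a$-subsets of a $k$-element set, and fix the number of colors $t\geq 2$. Observe that $K^{(a)}\subseteq V_k$, and that a copy of $K^{(a)}$ in $V_n$ can be exhibited by choosing a base set $B\subseteq [n]$ together with a set of $k$ free coordinates $C\subseteq [n]\setminus B$ and taking the family $\{B\cup X : X\subseteq C,\ |X|=a\}$; this is the image of $K^{(a)}$ under the (monotone) embedding that identifies the $k$ varying coordinates of $V_k$ with the elements of $C$.

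First I would let $N=R^{(a)}(k;t)$ denote the $a$-uniform hypergraph Ramsey number: the least integer such that every $t$-coloring of the $a$-subsets of an $N$-element set contains a $k$-element set all of whose $a$-subsets receive a single color. This number is finite by Ramsey's theorem for $a$-uniform hypergraphs. I then set $n_0=N$.

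Given any $t$-coloring $c:V_n\to[t]$ with $n\geq N$, the key step is to restrict $c$ to the layer $\binom{[n]}{a}$ of sets of weight exactly $a$. These sets are precisely the edges of the complete $a$-uniform hypergraph on vertex set $[n]$, so the restriction of $c$ is nothing other than a $t$-coloring of that hypergraph. By the choice of $N$ there is a $k$-set $K\subseteq [n]$ such that every $a$-subset of $K$ is assigned the same color $i$.

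The final step is to confirm that this monochromatic hypergraph clique is a monochromatic copy of $K^{(a)}$ in the required sense. Taking base set $B=\emptyset$ and free coordinates $C=K$, the family $\{X\subseteq K : |X|=a\}$ lies in $c^{-1}(i)$ and is exactly the image of $K^{(a)}$ under the embedding identifying the varying coordinates of $V_k$ with the elements of $K$; hence $c^{-1}(i)$ contains a copy of $K^{(a)}$, as desired. There is no genuine obstacle here: the reduction is immediate because the weight-$a$ layer of $\Q_n$ carries exactly the structure of a complete $a$-uniform hypergraph. The only point requiring a moment's care is this translation between an abstractly monochromatic complete $a$-uniform hypergraph and a cube-embedded copy of the clique, which the displayed embedding makes transparent.
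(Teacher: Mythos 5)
Your proof is correct and follows exactly the identification the paper itself relies on: the paper states this result as a direct citation of Ramsey's theorem, precisely because the weight-$a$ layer of $\Q_n$ is the complete $a$-uniform hypergraph on $[n]$, which is the reduction you carry out. Your explicit embedding with base set $B=\emptyset$ and free coordinates $C=K$ just makes transparent the step the paper treats as immediate, so there is nothing to add.
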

A trivial corollary is that any family of sets which are all the same size is
$t$-Ramsey for $t\geq 2$.  It is also obvious that any family of sets which
contains members of even and odd weight is not $2$-Ramsey since coloring all sets of even weight red and all sets of odd weight blue avoids monochromatic copies of such a family.

For $A\in \Vd$ the \emph{weight} of $A$ is $|A|$.  The collection of all sets of a fixed weight in $\Vd$ gives a special type of clique, called a \emph{layer}. The layer containing all sets of weight $i$ from $\Q_n$ is called the \emph{$i^{\textrm{th}}$ layer} (of $\Q_n$) and we denote it by $L_i$.
\begin{figure}
\begin{center}
\includegraphics[scale=0.35]{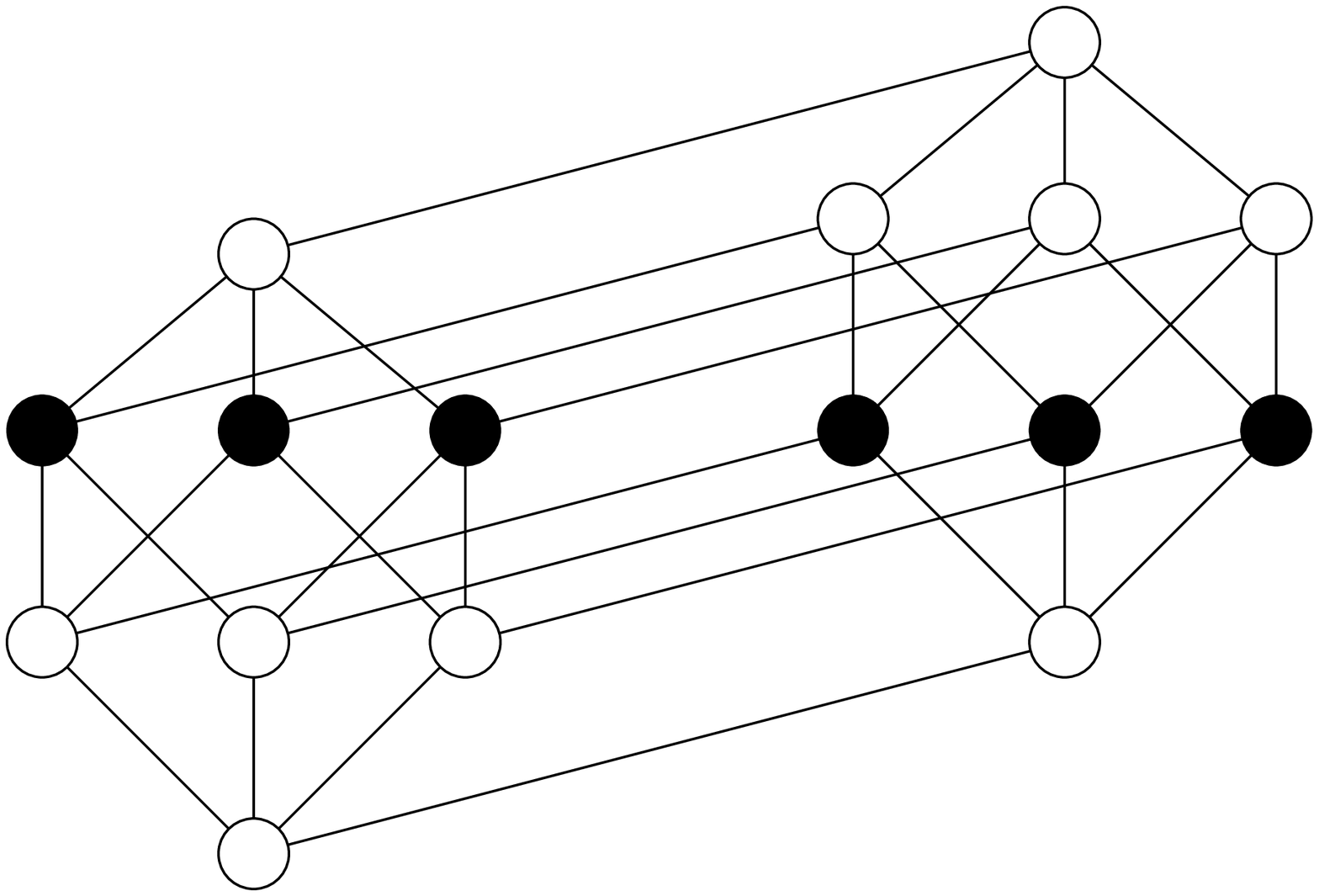}
\end{center}
\caption{The $2^{\tr{nd}}$ layer of $\Q_4$}
\end{figure}

\begin{figure}
\begin{center}
\includegraphics[scale=0.35]{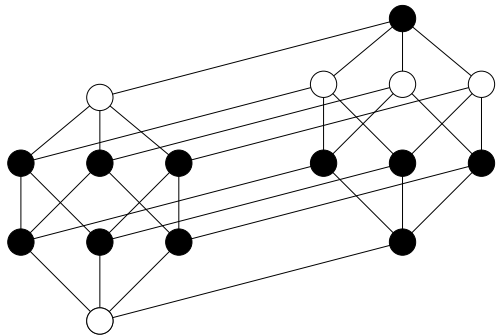}
\end{center}
\caption{A layered 2-coloring of $\Q_4$ with no monochromatic copy of $\Q_2$.}
\end{figure}
A particularly simple $t$-coloring of $\Q_n$ is one that is constant on each layer. We call such a coloring \emph{layered}. A set $F\subseteq \Vd$ is \emph{$t$-layer-Ramsey} if  there exists $n_L(F,t)$ such that for all $n\geq n_L$, every layered $t$-coloring of $\Q_n$ contains a monochromatic copy of $F$.

Our first result says that there is no difference between $t$-Ramsey and $t$-layer-Ramsey sets.
\begin{theorem}\label{layer:thm}
A set $S \subseteq \Vd$ is $t$-Ramsey iff it is $t$-layer-Ramsey.\end{theorem}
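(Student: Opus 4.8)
The plan is to prove the two implications separately. One direction is immediate: every layered $t$-coloring is in particular a $t$-coloring, so if $S$ is $t$-Ramsey then every layered coloring of a sufficiently large $\Q_n$ already contains a monochromatic copy of $S$, which gives $t$-layer-Ramsey (indeed one may take $n_L(S,t)\le n_0(S,t)$). The substance of the theorem is the converse: assuming $S$ is $t$-layer-Ramsey I must show it is $t$-Ramsey.

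For the converse the key idea is that an arbitrary $t$-coloring of a huge cube looks layered when restricted to a suitable large subcube. Precisely, set $M=n_L(S,t)$ and let $c$ be any $t$-coloring of $\Q_n$, viewed as a coloring of $2^{[n]}$. I aim to find a set $F\subseteq[n]$ with $|F|=M$ such that, for every $X\subseteq F$, the color $c(X)$ depends only on $|X|$. Then, identifying $2^F$ with $2^{[M]}$ by the order-preserving bijection and setting all coordinates outside $F$ to $0$, the restriction of $c$ to the subcube $\{X:X\subseteq F\}$ is a layered $t$-coloring of a copy of $\Q_M$. Since $M=n_L(S,t)$ and $S$ is $t$-layer-Ramsey, this restriction contains a monochromatic copy of $S$; as the subcube coloring is literally the restriction of $c$, this is a monochromatic copy of $S$ in $\Q_n$, as required.

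To produce $F$ I would apply Ramsey's theorem (Theorem \ref{Ramsey:thm}) in a ``stepped-up'' form. To each $M$-subset $A=\{a_1<\cdots<a_M\}$ of $[n]$ associate the super-color $\chi(A)=(c(\{a_i:i\in I\}))_{I\subseteq[M]}$, recording the $c$-colors of all $2^M$ subsets of $A$ according to the positions they occupy. This is a coloring of the $M$-subsets of $[n]$ with at most $t^{2^M}$ colors, so for $n$ large enough Ramsey's theorem yields a set $G$ with $|G|\ge 2M$ all of whose $M$-subsets share the same super-color. Taking $F$ to be the $M$ smallest elements of $G$ then works: given any $j$-subset $B\subseteq F$ there are at least $M$ elements of $G$ exceeding $\max B$, so $B$ can be completed to an $M$-subset of $G$ in which $B$ occupies positions $1,\dots,j$; by homogeneity of $\chi$ on $G$, $c(B)$ equals the common color attached to positions $1,\dots,j$, independently of $B$. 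Hence $c(X)$ depends only on $|X|$ for $X\subseteq F$.

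The only real obstacle is this last construction: forcing a single subcube to be simultaneously homogeneous for all weights $0,1,\dots,M$, rather than one weight at a time. Encoding the colors of all subsets of an $M$-set into one super-color reduces the problem to a single application of ordinary multicolor hypergraph Ramsey, and keeping $F$ at the bottom of $G$ guarantees enough room to realise every weight class in a fixed block of positions. Quantitatively one may take $n_0(S,t)=R^{(M)}(2M;t^{2^M})$, the $M$-uniform Ramsey number for $t^{2^M}$ colors and clique size $2M$, with $M=n_L(S,t)$.
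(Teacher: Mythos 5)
Your proof is correct, and while it shares the paper's overall skeleton (the trivial direction, plus a reduction of the converse to finding one large subcube on which the arbitrary coloring is layered --- this is exactly the content of the paper's Lemma \ref{layer:lem}), the way you manufacture that layered subcube is genuinely different. The paper iterates Ramsey's theorem once per uniformity: it sets $f_0=s$, $f_i=R(f_{i-1},i,t)$, and extracts a nested chain $F_0\subseteq F_1\subseteq\cdots\subseteq F_{s-1}$ in which the $j$-th step makes the coloring monochromatic on the $j$-sets of $F_{j-1}$, so that all weights are simultaneously homogeneous on the innermost set $F_0$. You instead make a single application of the $M$-uniform hypergraph Ramsey theorem after bundling the colors of all $2^M$ subsets of an $M$-set (indexed by the positions they occupy) into one super-color taking at most $t^{2^M}$ values, and then read off layeredness via the ``leftmost $M$ elements of a $2M$-element homogeneous set'' device; this step is sound, since any $B\subseteq F$ can be completed inside $G$ by elements of $G\setminus F$, all exceeding $\max F\geq\max B$, which forces $B$ into positions $1,\dots,|B|$ and shows $c(B)$ depends only on $|B|$. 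As for what each approach buys: your one-shot argument is arguably cleaner and gives the single closed-form bound $n_0(S,t)\leq R^{(M)}\bigl(2M;t^{2^M}\bigr)$ with $M=n_L(S,t)$, avoiding the nested composition of Ramsey numbers of increasing uniformity (both bounds are of tower type, and the paper explicitly poses improving $c_L(s,t)$ as an open problem, so neither dominates decisively); the paper's step-by-step construction, on the other hand, directly yields the strengthening stated in its remark after Lemma \ref{layer:lem} --- a layered subcube with an arbitrary prescribed vertex $B$ ``at the bottom'' --- whereas your subcube $2^F$ is anchored at $\emptyset$ and would need the super-coloring to be defined relative to $B$ to recover that refinement.
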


For the non-trivial implication in Theorem \ref{layer:thm} we require the following lemma. 
\begin{lemma}\label{layer:lem}
If $s,t\geq 1$ then there exists $c_L(s,t)$ such that any $t$-coloring of $\Q_n$, where $n\geq c_L$, contains a copy of $\Q_s$ such that the restriction of the coloring to $\Q_s$ is layered.\end{lemma}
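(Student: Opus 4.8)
The plan is to work in the Boolean lattice and reduce the lemma to a purely set-theoretic statement: \emph{it suffices to find a set $T\subseteq[n]$ with $|T|=s$ such that, for every $j$ with $0\leq j\leq s$, all $j$-element subsets of $T$ receive the same color under $c$.} Indeed, taking $T=\{i_1,\ldots,i_s\}$ as the free coordinates and fixing every coordinate outside $T$ to $0$ gives a genuine embedded copy of $\Q_s$, namely the subcube $\{C:C\subseteq T\}$; the vertices of weight $j$ in this subcube are exactly $\{C\subseteq T:|C|=j\}$, so the condition above says precisely that $c$ is constant on each layer of the subcube, i.e.\ its restriction is layered. (One could equally fix a nonempty base $A$ disjoint from $T$ and work with $c(A\cup C)$ throughout, but the empty base already suffices.)

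To produce such a $T$ I would apply Ramsey's theorem (Theorem \ref{Ramsey:thm}) once for each weight, refining the ground set as I go. Set $U_0=[n]$. Given $U_{j-1}$, view the restriction of $c$ to the $j$-element subsets of $U_{j-1}$ as a $t$-coloring of the complete $j$-uniform hypergraph on $U_{j-1}$; Theorem \ref{Ramsey:thm} then yields a still-large subset $U_j\subseteq U_{j-1}$ all of whose $j$-subsets carry a single color. Iterating for $j=1,2,\ldots,s-1$ produces a nested chain $U_0\supseteq U_1\supseteq\cdots\supseteq U_{s-1}$, and choosing $n=c_L(s,t)$ to be a suitable (tower-type) iterate of $t$-color hypergraph Ramsey numbers ensures $|U_{s-1}|\geq s$. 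Any $s$-element set $T\subseteq U_{s-1}$ then has all of its $j$-subsets monochromatic for each $1\leq j\leq s-1$; the extreme weights $j=0$ and $j=s$ are automatic, since a set of size $s$ has only one subset of each of those sizes.

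The one genuine point is that the conclusions for the different weights must hold \emph{simultaneously} on a single set $T$, not on $s$ unrelated sets, and this is exactly what the nesting $U_0\supseteq\cdots\supseteq U_{s-1}$ buys. Passing to a subset never destroys monochromaticity already achieved at a lower weight: for $j\leq s-1$ we have $U_{s-1}\subseteq U_j$, so every $j$-subset of $U_{s-1}$ is a $j$-subset of $U_j$ and hence has the single color guaranteed at step $j$. Thus $U_{s-1}$ inherits monochromaticity at all weights $1,\ldots,s-1$ at once. Everything else is routine bookkeeping of Ramsey numbers; since only finitely many applications are needed, the resulting threshold $c_L(s,t)$ is finite, which is all the lemma asserts.
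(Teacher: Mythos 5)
Your proposal is correct and is essentially the paper's own argument: both iterate Ramsey's theorem once per uniformity with a nested chain of ground sets, so that a final $s$-element set has all its $j$-subsets monochromatic simultaneously for $1\leq j\leq s-1$, with weights $0$ and $s$ automatic, yielding the layered subcube $\{C:C\subseteq T\}$. The only (immaterial) difference is bookkeeping direction --- you refine upward through uniformities $j=1,2,\ldots,s-1$, while the paper defines the tower $f_i=R(f_{i-1},i,t)$ and extracts the nested sets starting from the top --- and both give the same tower-type bound on $c_L(s,t)$.
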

\begin{proof}  By Ramsey's theorem, for any $s\geq l\geq 0$ and $t\geq 2$ there exists an integer $R(s,l,t)$ such that whenever the collection of all $l$-sets from $[R(s,l,t)]$ are $t$-colored there is a monochromatic clique of order $s$. We define a sequence  $f_0,f_1,...,f_{s-1}$ by: $f_0 = s$, $f_i = R(f_{i-1},i,t)$  for $i>0$.

We claim that $c_L(s,t) = f_{s-1}$ will suffice.  Suppose that $\chi$ is a $t$-coloring of $\Q_{f_{s-1}}$. By the definition of the $\{f_i\}$ there exists a nested sequence of sets $F_0 \subseteq F_1 \subseteq F_2 \cdots \subseteq F_{s-1} = [c_L(s,t)]$ such that $|F_j|=f_j$ for $j=0,1,\ldots,s-1$ and the restriction of $\chi$ to $F_{j-1}^{(j)}$ is monochromatic for $j=1,2,...,s-1$.  (To see this start with $F_{s-1}$ and work down.)  
Hence the restriction of $\chi$ to $F_0^{(j)}$ is monochromatic for $j=1,2,\ldots,s-1$.  Adding the
empty set and $F_0$ then gives the desired copy of $\Q_s$ on which the restriction of $\chi$ is layered. \end{proof}

%
%

We remark that our proof  actually implies that in any $t$-coloring of $\Q_{n}$, where $n\geq c_L(s,t)$, and for any $B\in \Vn$ there is copy of $\Q_s$ with ``$B$ at the bottom'' for which the restriction of the coloring is layered. The integer $c_L(s,t)$ produced by this ``tower of Ramsey numbers'' is obviously rather large if $s$ is large.  It would
be interesting to find a good upper bound for the smallest possible value
of $c_L(s,t)$.
\begin{proof}[Proof of Theorem \ref{layer:thm}]
Since a layered $t$-coloring of the cube is still a $t$-coloring one implication is trivial. 

For the converse suppose that $S\subseteq \Vd$ is $t$-layer-Ramsey. Let $\chi$ be a $t$-coloring of $\Q_n$ with $n\geq c_L(n_L(S,t),t)$. By Lemma \ref{layer:lem} there is subcube $\Q_{n_L(S,t)}$ of $\Q_n$ such that the restriction of $\chi$ to this subcube is layered. Since $S$ is $t$-layer-Ramsey this subcube contains a monochromatic copy of $S$.
\end{proof}

For $S\subseteq \Vd$ we define $W_d(S)=\{|A|:A\in S\}$. For example, if \[
S=\{\{1,2,3\},\{1,2,4\},\{1,3,4\},\{5\}\}\subseteq V_5\] then $W_5(S)=\{1,3\}$. 

A layered $t$-coloring $c$ of $\Q_n$ is equivalent to a $t$-coloring $\hat{c}$ of the integers $\{0,1,\ldots,n\}$ (given by $\hat{c}(i)=c(L_i)$). Thus, using Theorem \ref{layer:thm}, we can translate our original question ``which subsets of the hypercube are $t$-Ramsey'', into a question concerning $t$-colorings of the integers that avoid certain distance sets. This is the key observation which underlies most of our results.

For $D\subseteq \mathbb{Z}$ and $b\in \mathbb{Z}$ we define $D+b=\{d+b\mid d\in D\}$ to be the translation of $D$ by $b$. We say that a family of sets of integers $\D=\{D_1,D_2,\ldots, D_k\}$ is \emph{$t$-translate-Ramsey} if for every $t$-coloring of the integers, $c:\mathbb{Z}\to [t]$ there exists $D\in \D$ and  $j\in \mathbb{Z}$ such that $D+j$ is monochromatic, i.e. every $t$-coloring of the integers contains a monochromatic translate of a set from the family.

For example the family $\D = \{\{0,1\},\{0,2\},\ldots,\{0,t\}\}$ is $t$-translate-Ramsey but not $(t+1)$-translate-Ramsey (to get a $(t+1)$-coloring with no monochromatic translation of any set in $\D$ just repeat a list of the $t+1$ distinct colors).
\begin{lemma}\label{fin:lemma}
If  $\D=\{D_1,D_2,\ldots, D_k\}$ is $t$-translate-Ramsey, then there exists $n_T(\D,t)$ such that every $t$-coloring of $[n_T(\D,t)]$ contains a monochromatic translate of a set from $\D$.
\end{lemma}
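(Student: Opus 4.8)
The plan is to prove this by a compactness argument over the space of all $t$-colorings of $\mathbb{Z}$, exploiting the fact that each $D_i$ is a \emph{finite} set of integers (as it always is in our applications). The point is that the hypothesis ``$\D$ is $t$-translate-Ramsey'' is a statement about every two-sided coloring of $\mathbb{Z}$, and we must upgrade it to one that is already witnessed inside a single bounded interval; this is exactly the ``local witness'' phenomenon that compactness of $[t]^{\mathbb{Z}}$ delivers.

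First I would topologise the set of colorings: give $[t]$ the discrete topology and $[t]^{\mathbb{Z}}$ the product topology, so that by Tychonoff's theorem $[t]^{\mathbb{Z}}$ is compact. For each $N\geq 1$ define
\[
B_N=\{\,c\in[t]^{\mathbb{Z}} : \text{no translate } D_i+j \text{ with } D_i+j\subseteq[-N,N] \text{ is monochromatic under } c\,\}.
\]
Since each $D_i$ is finite there are only finitely many pairs $(i,j)$ with $D_i+j\subseteq[-N,N]$, and for each such pair the condition ``$c$ is non-constant on the finite set $D_i+j$'' depends on only finitely many coordinates and so is clopen. Hence $B_N$, being a finite intersection of clopen sets, is closed. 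Moreover $B_{N+1}\subseteq B_N$, since a translate fitting inside $[-N,N]$ also fits inside $[-(N+1),N+1]$.

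Next I would observe that $\bigcap_{N\geq 1}B_N$ is exactly the set of colorings of $\mathbb{Z}$ possessing no monochromatic translate of any $D_i$ whatsoever, because every translate $D_i+j$ is a finite set and hence lies in $[-N,N]$ for all sufficiently large $N$. By hypothesis $\D$ is $t$-translate-Ramsey, so this intersection is empty. A nested sequence of closed sets in a compact space with empty intersection cannot have all terms nonempty (the finite intersection property), so $B_{N_0}=\varnothing$ for some $N_0$. This says that every $t$-coloring of $\mathbb{Z}$ has a monochromatic translate lying inside $[-N_0,N_0]$; restricting any coloring of $[-N_0,N_0]$ (extended arbitrarily to $\mathbb{Z}$) and then translating the whole picture by $N_0+1$, we see that $n_T(\D,t)=2N_0+1$ works for the interval $[n_T]$, since the existence of a monochromatic translate is translation-invariant.

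The main thing to get right is the interface between ``local'' and ``global'' badness: one must check that the conditions defining $B_N$ are genuinely closed (equivalently, that a pointwise limit of colorings avoiding translates within a window still avoids them), which is where finiteness of the $D_i$ is essential, and that every translate is eventually captured by some window, so that the infinite intersection really is the globally-bad set. These points are routine once the topology is fixed; the argument could equally be phrased as a diagonal (or K\"onig's lemma) extraction of a limiting bad coloring of $\mathbb{Z}$ from a supposed sequence of bad colorings of $[-N,N]$, but the finite-intersection-property formulation is the cleanest.
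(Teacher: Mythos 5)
Your proposal is correct, and it takes the route the paper deliberately declines: the paper's proof opens with the parenthetical remark that the lemma ``follows easily by compactness but for completeness we give an elementary self-contained proof,'' so you have fleshed out exactly the argument the authors waved at. Your details check out: each $D_i$ being finite, only finitely many translates $D_i+j$ fit inside $[-N,N]$, so each $B_N$ is clopen, the $B_N$ are nested, their intersection is the set of globally bad colorings (empty by hypothesis), and compactness of $[t]^{\mathbb{Z}}$ forces some $B_{N_0}=\varnothing$; the final translation to $[2N_0+1]$ is sound because monochromaticity of a translate contained in the window is determined by the restriction alone. The paper instead argues elementarily: setting $d=\max\{\max D-\min D: D\in\D\}$, it takes $n_T=d(t^d+1)$, partitions $[n_T]$ into $t^d+1$ blocks of length $d$, finds by pigeonhole two identically colored blocks $B_i,B_j$ with $i<j$, and shows that a hypothetical bad coloring of $[n_T]$ would yield a bad \emph{periodic} coloring of $\mathbb{Z}$ repeating the colors of $B_i,\ldots,B_{j-1}$ (any monochromatic translate spans at most two consecutive blocks, and every consecutive pair of blocks in the periodic coloring already occurs in the original one), contradicting the hypothesis. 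The trade-off is clear: your argument is shorter and cleaner but purely existential, giving no bound on $n_T(\D,t)$; the paper's argument yields the explicit bound $n_T\leq d(t^d+1)$ and, as the authors note after the proof, the useful by-product that a family which is \emph{not} $t$-translate-Ramsey admits a periodic bad coloring of $\mathbb{Z}$ with period at most $dt^d$ --- information invisible to the compactness proof. One small presentational point: state explicitly at the outset that the members of $\D$ are finite sets (you flag this only parenthetically); both your finitely-many-windows count and the paper's definition of $d$ require it.
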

\begin{proof} (This follows easily by compactness but for completeness we give an elementary self-contained proof.)
Suppose that $\D$ is $t$-translate-Ramsey. Let $d=\max\{\max D - \min D: D\in\D\}$. We will show that $n_T(\D,t)=d(t^d+1)$ will suffice. Suppose, for a contradiction, that there is a $t$-coloring $c:[d(t^d+1)]\to [t]$ with no monochromatic translate of any $D\in\D$. Let $B_i=\{1+(i-1)d,2+(i-1)d,\ldots,id\}$ then $[n_T]=B_1\cup B_2\cup\cdots B_{t^d+1}$. Since there are $t^d+1$ blocks $B_i$ and each block contains $d$ integers,  there exist blocks $B_i,B_j$ with $i<j$ such that $B_i$ and $B_j$ are colored identically. Now color the integers periodically using the colors of $B_i,B_{i+1},\ldots,B_{j-1}$

If this coloring of $\mathbb{Z}$ contains a monochromatic translate of $D\in \D$ then by definition of $d$ this translate meets at most two consecutive blocks of the coloring. Moreover since the coloring of $[n_T]$ contained no such monochromatic translate it must meet two consecutive blocks which did not occur in the original coloring of $[n_T]$. But no such pair of blocks occur (since $B_j$ and $B_i$ are colored identically).
\end{proof}
It would be interesting to find an order of magnitude estimate for the
smallest possible $n_T(\D,t)$.  We note that the proof of Lemma \ref{fin:lemma} also shows that if $\D$ is not $t$-translate-Ramsey then there exists a periodic coloring of the integers with period of length at most $dt^d$ which contains no monochromatic translates of sets from $\D$.

The link between $t$-Ramsey subsets of vertices of the hypercube and $t$-translate-Ramsey families is given by considering which collections of layers a given subset $S\subseteq \Vd$ can meet in the hypercube under all possible embeddings. 

For $S\subseteq \Vd$ we define \[
W_d^*(S)=\{W_d(\psi(S)):\psi :\Vd\to \Vd,\textrm{ is an automorphism}\}.\]
Any automorphism of $\Q_d$ can be expressed (in the Boolean lattice model) as a set complement followed by a permutation of $[d]$. Since a permutation of the $d$ labels does not alter the weight of $v\in V(\Q_d)$ we can restrict our attention to the \emph{simple automorphisms} of $\Q_d$ of the form $\psi_B(A)=A\Delta B$, $A,B\in 2^{[d]}$ when determining $W_d^*(S)$:
\[
W_d^*(S)=\{W_d(\psi(S)):\psi :\Vd\to \Vd,\textrm{ is a simple automorphism}\}.\] 

For example, let $S=\{\{1,2,3\},\{1,2,4\},\{1,3,4\},\{2,3,4\},\{5\}\} \subseteq V_6$.  Now $W_6(S)=\{1,3\}$, while 
\begin{multline*}
W_6^*(S)=\{\{1,3\},\{2,4\},\{3,5\},\{0,4\},\{1,5\},\{2,6\},
\{0,2,4\},\{1,3,5\},\{2,4,6\}\}.\end{multline*}
Note that if $d_1\leq d_2$ and $S$ can be embedded in $\Q_{d_1}$ then $S$ can also be embedded in $\Q_{d_2}$, so $W^*_d(S)$ depends on the value of $d$. (In our example above $W_5^*(S)=\{\{1,3\},\{2,4\},\{0,4\},\{1,5\},\{0,2,4\},\{1,3,5\}\}$.) 
In general if $d_1\leq d_2$ then $W^*_{d_1}(S)\subseteq W_{d_2}^*(S)$, while $W_{d_2}^*(S)\setminus W_{d_1}^*(S)$ consists of translates (in $[d_2]$) of sets from $W_{d_1}^*(S)$. For this reason $W^*_{d_1}(S)$ is $t$-translate-Ramsey iff $W^*_{d_2}(S)$ is $t$-translate-Ramsey and so we define $W^*(S)$ to be $W_d^*(S)$, with $d$ minimal such that $S\subseteq \Vd$.

When considering whether or not $W^*(S)$ is $t$-translate-Ramsey it is natural to define $W'(S)$ to be the family of all translates of sets from $W^*(S)$ which have smallest element zero and which are minimal with respect to inclusion. 
Thus in our example above  we have $W'(S)=\{\{0,2\},\{0,4\}\}$.

\begin{lemma}\label{translate:lem} If $S\subseteq \Vd$ then the following are equivalent.
\begin{itemize}
\item[(i)] $S$ is $t$-Ramsey;
\item[(ii)]  $W^*(S)$ is $t$-translate-Ramsey;
\item[(iii)] $W'(S)$ is $t$-translate-Ramsey.
\end{itemize}
\end{lemma}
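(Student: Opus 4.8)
The plan is to establish $(i)\Leftrightarrow(ii)$ by passing through Theorem~\ref{layer:thm} (to replace $t$-Ramsey by $t$-layer-Ramsey) and building a dictionary between monochromatic copies of $S$ and monochromatic translates of members of $W^*(S)$, and then to establish $(ii)\Leftrightarrow(iii)$ by two soft observations about translate-Ramsey families. Throughout I take $d$ minimal with $S\subseteq\Vd$, so that $W_d^*(S)=W^*(S)$.

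First I would record the weight computation that drives everything. Writing an embedding $\psi:\Vd\to\Vn$ in the Boolean lattice as a choice of a $d$-set $T\subseteq[n]$ of varying coordinates, a fixed ``on-set'' $F\subseteq[n]\setminus T$, and, on $T$, an automorphism of $\Q_d$ (which up to an irrelevant permutation is a simple automorphism $\psi_C$), one gets $|\psi(A)|=|F|+|A\Delta C|$ for each $A$, hence $W_n(\psi(S))=|F|+W_d(\psi_C(S))$. Consequently the weight-sets realisable by embeddings are exactly $\mathcal R_n=\{E+k: E\in W^*(S),\ 0\le k\le n-d\}$, and every member of $\mathcal R_n$ is a translate of a set in $W^*(S)$. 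Since a layered coloring $c$ of $\Q_n$ is constant on each layer, its associated integer coloring $\hat c$ (with $\hat c(i)=c(L_i)$) makes $\psi(S)$ monochromatic precisely when $\hat c$ is constant on $W_n(\psi(S))$. This yields the key equivalence $(\star)$: $c$ contains a monochromatic copy of $S$ iff $\hat c$ is constant on some set in $\mathcal R_n$.

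With $(\star)$ in hand, the easy direction $(i)\Rightarrow(ii)$ I would prove by contraposition: given a coloring $c_0:\mathbb Z\to[t]$ with no monochromatic translate of any $E\in W^*(S)$, the layered colorings of $\Q_n$ induced by $c_0|_{\{0,\dots,n\}}$ have, by $(\star)$ and the fact that each set in $\mathcal R_n$ is a translate of a member of $W^*(S)$, no monochromatic copy of $S$ for any $n$; hence $S$ is not $t$-layer-Ramsey and, by Theorem~\ref{layer:thm}, not $t$-Ramsey. The converse $(ii)\Rightarrow(i)$ is where the main obstacle lies: Lemma~\ref{fin:lemma} produces, in any fixed window, a monochromatic translate $E+j$ of some $E\in W^*(S)$, but to invoke $(\star)$ I need this translate to actually lie in $\mathcal R_n$, i.e.\ to have non-negative shift $k=j$ (an ``on-set'' cannot have negative size). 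The translate returned by the lemma may sit too low, so I would not look at the bottom of $\hat c$ but in a high window: set $M_0=\max\{\min E:E\in W^*(S)\}$ and apply the translated form of Lemma~\ref{fin:lemma} to $\hat c$ on $\{M_0,\dots,M_0+n_T-1\}$, where $n_T=n_T(W^*(S),t)$. Any monochromatic translate $E+j$ found there satisfies $\min E+j\ge M_0\ge\min E$, forcing $j\ge0$, while $j\le M_0+n_T$ gives $j\le n-d$ for $n$ large; thus $E+j\in\mathcal R_n$ and $(\star)$ produces a monochromatic copy of $S$. This shows $S$ is $t$-layer-Ramsey, hence $t$-Ramsey.

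Finally I would settle $(ii)\Leftrightarrow(iii)$ using two elementary facts about a finite family $\D$ of finite sets of integers. (a) Whether some translate $D+j$ of $D$ is monochromatic depends only on the translation class of $D$, so normalising every set to have least element $0$ leaves the $t$-translate-Ramsey property unchanged. (b) If $D'\subseteq D$ then a monochromatic translate of $D$ is also a monochromatic translate of $D'$, so deleting inclusion-non-minimal sets does not affect, in any coloring, whether a monochromatic translate of the family exists; here finiteness of $W^*(S)$ is used to guarantee that every normalised set contains an inclusion-minimal one. Since $W'(S)$ is by definition obtained from $W^*(S)$ by normalising and then keeping the inclusion-minimal sets, (a) and (b) give that $W^*(S)$ is $t$-translate-Ramsey iff $W'(S)$ is, completing the chain of equivalences.
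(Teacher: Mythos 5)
Your proposal is correct and follows the same overall route as the paper's proof: the equivalence (ii)$\Leftrightarrow$(iii) is dispatched by the same two soft observations (translation-invariance of the monochromatic-translate property, and the fact that a monochromatic translate of a set is a monochromatic translate of any subset), and both directions of (i)$\Leftrightarrow$(ii) pass through Theorem \ref{layer:thm} and the dictionary between layered colorings of $\Q_n$ and $t$-colorings of $\{0,1,\dots,n\}$, with Lemma \ref{fin:lemma} supplying the finite version in the direction (ii)$\Rightarrow$(i). The one substantive difference is your handling of the shift-sign issue, and it is a genuine refinement: the paper's proof asserts that the monochromatic translate $D+j$ of some $D\in W^*(S)$ found inside $[n_0]$ yields a subcube of $\Q_{n_0}$ containing a monochromatic copy of $S$, which tacitly requires $D+j$ to lie in the realizable family $\{E+k : E\in W^*(S),\ 0\le k\le n-d\}$. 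Since $\min D$ can exceed $1$, the shift $j$ may be negative, and a downward translate of $D$ need not belong to $W^*(S)$ (e.g.\ for suitable two-clique configurations one can have $\{2,4\}\in W^*(S)$ but $\{1,3\}\notin W^*(S)$), so the translate found by Lemma \ref{fin:lemma} need not directly correspond to an embedding. Your patch --- applying the translation-invariant form of Lemma \ref{fin:lemma} in the high window starting at $M_0=\max\{\min E : E\in W^*(S)\}$, which forces $j\ge 0$ and hence realizability for $n$ large --- closes this gap cleanly, at the modest cost of making the weight computation $(\star)$ and the family $\mathcal{R}_n$ explicit. In short: same architecture as the paper, but more careful on a step the paper elides.
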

\begin{proof} Clearly $W^*(S)$ is $t$-translate-Ramsey iff $W'(S)$ is $t$-translate-Ramsey (taking translations and removing supersets can have no effect on whether or not a family is $t$-translate-Ramsey). We will show that $S\subseteq \Vd$ is $t$-Ramsey iff  $W^*(S)$ is $t$-translate-Ramsey.

Suppose that $S\subseteq \Vd$ is $t$-Ramsey and $n_0(S)$ is sufficiently large that any $t$-coloring of $\Q_{n_0}$ contains a monochromatic copy of $S$. Now take a $t$-coloring $c$ of $\mathbb{Z}$. This induces a layered coloring of $\Q_{n_0}$ given by $\hat{c}(L_i)=c(i)$. By definition of $n_0$ there is a subcube of $Q_{n_0}$ containing a monochromatic copy of $S$. The set of layers of $\Q_{n_0}$ in which this copy of $S$ lies is a translate of some $D\in W^*(S)$ and hence there is a monochromatic translate of $D$ in the original coloring of the integers. Hence $W^*(S)$ is $t$-translate-Ramsey.

Conversely, suppose that $W^*(S)$ is $t$-translate-Ramsey. Lemma \ref{fin:lemma} implies that there exists $n_0$ such that any $t$-coloring of $[n_0]$ contains a monochromatic translate of some $D\in W^*(S)$. Let $c$ be a layered $t$-coloring of $Q_{n_0}$. Define a  $t$-coloring $\hat{c}$ of $[n_0]$ by $\hat{c}(i)=c(L_i)$. By definition of $n_0$, this coloring contains a monochromatic translate of some $D\in W^*(S)$. Hence there is a subcube of $Q_{n_0}$ containing a monochromatic copy of $S$. So $S$ is $t$-layer-Ramsey and hence by Theorem \ref{layer:thm} is $t$-Ramsey.
\end{proof}

Given Ramsey's theorem (Theorem \ref{Ramsey:thm}), telling us that all cliques are $t$-Ramsey for all $t\geq 2$, a natural question is to ask whether unions of cliques can also be $t$-Ramsey. The answer, rather surprisingly, depends on how many cliques we have. 
\section{Unions of cliques}
\subsection{Preliminaries}
In order to decide which unions of cliques are Ramsey we need to consider the different sets of layers in which such unions may be embedded. 

Recall that a clique of weight $a$ and order $s$ consists of all $a$-sets from a vertex set of size $s$. (Note that here we use the term \emph{vertex} to mean a vertex of a hypergraph, rather than a vertex of the hypercube.) We say that a union of cliques is \emph{vertex disjoint} if the vertex sets of distinct cliques are pairwise disjoint. For example if $S_1=[3]$, $S_{2}=[4,10]$ and $S_3=[13,20]$ then $S=S_1^{(2)}\cup S_2^{(4)}\cup S_3^{(5)}$ is a vertex disjoint union of cliques. We will focus mainly on vertex disjoint unions due to the following simple result.

\begin{lemma}\label{nondisj:lem}
If $t\geq 2$ and $S$ 
 is a vertex disjoint union of cliques that is \emph{not} $t$-Ramsey then any union of cliques with the same weights and orders as $S$ (but not necessarily vertex disjoint) is also not $t$-Ramsey.
\end{lemma}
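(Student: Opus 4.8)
The plan is to reduce the whole question to the translate-Ramsey formulation of Lemma \ref{translate:lem} and then show that, up to translation, every weight set arising from $T$ already arises from $S$. Throughout, write $S = S_1^{(a_1)}\cup\cdots\cup S_k^{(a_k)}$ with the $S_i$ pairwise disjoint and $|S_i|=s_i$, and let $T = T_1^{(a_1)}\cup\cdots\cup T_k^{(a_k)}$ be any union of cliques with the same orders $|T_i|=s_i$ and weights $a_i$, but with the $T_i$ possibly overlapping. Placing both $S$ and $T$ inside a common $V_N$ with $N$ large (which, as noted before Lemma \ref{translate:lem}, changes the relevant weight sets only by translates and so is irrelevant for $t$-translate-Ramseyness), we may compute $W^*(S)$ and $W^*(T)$ using simple automorphisms $\psi_B$ of $V_N$. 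By Lemma \ref{translate:lem} it suffices to prove that if $W^*(S)$ is not $t$-translate-Ramsey then neither is $W^*(T)$, and for this I would establish the following statement: every set in $W^*(T)$ is a translate of some set in $W^*(S)$.

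Granting this, the lemma follows quickly. Since $S$ is not $t$-Ramsey, Lemma \ref{translate:lem} gives a coloring $c:\mathbb{Z}\to[t]$ with no monochromatic translate of any member of $W^*(S)$. If some translate $D_T+j$ of a set $D_T\in W^*(T)$ were monochromatic under $c$, then writing $D_T = D_S + m$ for a suitable $D_S\in W^*(S)$ and $m\in\mathbb{Z}$ would make $D_S + (m+j)$ monochromatic, contradicting the choice of $c$. Hence $c$ witnesses that $W^*(T)$ is not $t$-translate-Ramsey, so by Lemma \ref{translate:lem} the union $T$ is not $t$-Ramsey.

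To prove the displaced statement I compute the contribution of a single clique to a weight set. Fix a simple automorphism $\psi_B$ and a clique $T_i^{(a_i)}$, and set $b_i = |B\cap T_i|$. For any $a_i$-subset $A$ of $T_i$ we have $|A\Delta B| = a_i + |B| - 2\,|A\cap(B\cap T_i)|$, and as $A$ ranges over all $a_i$-subsets of $T_i$ the intersection size $|A\cap(B\cap T_i)|$ takes every integer value between $\max(0,a_i+b_i-s_i)$ and $\min(a_i,b_i)$. Thus the weights contributed by $T_i^{(a_i)}$ form the arithmetic progression of common difference $2$
\[
\{\, a_i + |B| - 2m \;:\; \max(0,a_i+b_i-s_i)\le m \le \min(a_i,b_i)\,\},
\]
which depends only on $a_i,s_i,b_i$ and on the single global quantity $|B|$. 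Consequently $W_N(\psi_B(T))$ is the union over $i$ of these progressions, and since $|B|$ enters every progression as the same additive constant, this union is determined, up to one global translation, by the intersection profile $(b_1,\ldots,b_k)$ alone. Now given $D_T\in W^*(T)$ realized by some $\psi_B$ with profile $(b_1,\ldots,b_k)\in\{0,\ldots,s_1\}\times\cdots\times\{0,\ldots,s_k\}$, I choose a simple automorphism $\psi_{B'}$ of $S$ with $|B'\cap S_i| = b_i$ for every $i$; this is possible precisely because the $S_i$ are pairwise disjoint, so an arbitrary profile can be realized independently on the blocks $S_i$. By the computation above, clique $i$ of $S$ then contributes exactly the same progression as clique $i$ of $T$ apart from the global shift $|B'|-|B|$, so $W_N(\psi_{B'}(S)) = D_T + (|B'|-|B|)$ is the required translate.

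The one point genuinely needing care — and the only real obstacle — is the observation that each clique's weight contribution depends on $B$ only through the local size $b_i=|B\cap T_i|$ and the global size $|B|$, never on how $B$ meets the overlaps between distinct $T_i$. Once this is isolated, the decisive asymmetry is that the disjointness of the $S_i$ lets $S$ realize \emph{every} intersection profile in $\{0,\ldots,s_1\}\times\cdots\times\{0,\ldots,s_k\}$, whereas $T$ can realize only some of them; this is exactly why the implication runs from $S$ to $T$ and not in the reverse direction.
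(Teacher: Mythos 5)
Your proposal is correct and follows essentially the same route as the paper: the paper's proof also reduces via Lemma \ref{translate:lem} to showing $W'(\hat{S})\subseteq W'(S)$, justified by the observation that any intersection profile $(b_1,\ldots,b_s)$ realizable for the overlapping union is realizable for the disjoint one, whereas the disjoint union realizes all profiles. The only difference is presentational — the paper cites the discussion around Lemma \ref{embed:lem} and the definition of $E(b;b_1,\ldots,b_s)$ for the fact that each clique's weight contribution depends only on $b_i$ and $|B|$, while you rightly identify this as the crux and verify it explicitly.
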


Let $S=K_1\cup K_2\cdots \cup K_s$ be a 
union of cliques from $\Q_d$. Suppose that $K_i$ is of weight $a_i$ and order $a_i+t_i$, i.e. $K_i\simeq K_{a_i+t_i}^{(a_i)}$. We wish to determine $W^*(S)$. Consider a simple automorphism given by $\psi_B(A)=A\Delta B$ for some $B\subseteq [d]$. 
Let us suppose that $|B\cap V(K_i)|=b_i$ and $|B|=b$. If $b_i\in \{0,a_i+t_i\}$ then $\psi_B(K_i)$ will be contained in a single layer (either $b+a_i$ or $b-a_i$). However if $0<b_i<a_i+t_i$ then $\psi_B(K_i)$ will meet multiple layers. In order to succinctly describe which layers $\psi_B(K_i)$ will meet  we need the following notation.

For integers $x<y$ of the same parity we define  \[
[x,y]_2=\{x,x+2,\ldots,y-2,y\}.\]

\begin{lemma}\label{embed:lem}
If $S,\psi_B, b_1,\ldots,b_s$ and $b$ are as above and $D_i$ 
 denotes the set of layers that $\psi_B(K_i)$ meets then
\[D_i=[\max\{a_i-2b_i,-a_i\},\min\{a_i,a_i+2(t_i-b_i)\}]_2+b.\] Moreover precisely one of the following holds for each $1\leq i\leq s$:
\begin{itemize}
\item[(i)] $-a_i+b$ or $a_i+b$ belongs to $D_i$,
\item[(ii)] $t_i<b_i<a_i$ and $D_i=[a_i-2b_i,a_i-2(b_i-t_i)]_2+b$.
\end{itemize}
\end{lemma}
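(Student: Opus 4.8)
The plan is to track, for a single clique $K_i$, how the weight of an element changes under $\psi_B$ and then read off the set of layers directly. Fix $A\in K_i$, so $A\subseteq V(K_i)$ and $|A|=a_i$. Since $\psi_B(A)=A\Delta B$ we have $|\psi_B(A)|=a_i+b-2|A\cap B|$, and because $A\subseteq V(K_i)$ the intersection $A\cap B$ lies inside $B':=B\cap V(K_i)$, a set of size $b_i$. Writing $c=|A\cap B'|$, the layer that $\psi_B(A)$ occupies is therefore $a_i+b-2c$. Thus $D_i$ is exactly the set of values $a_i+b-2c$ as $c$ ranges over the intersection sizes that an $a_i$-subset of the $(a_i+t_i)$-set $V(K_i)$ can have with the fixed $b_i$-subset $B'$.

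First I would pin down which values of $c$ are attainable. An $a_i$-set $A\subseteq V(K_i)$ meets $B'$ in $c$ points and meets $V(K_i)\setminus B'$ (a set of size $a_i+t_i-b_i$) in $a_i-c$ points, so $c$ is attainable precisely when $0\le c\le b_i$, $0\le a_i-c$, and $a_i-c\le a_i+t_i-b_i$; equivalently $\max\{0,b_i-t_i\}\le c\le\min\{a_i,b_i\}$, and every integer in this range occurs. Since $a_i+b-2c$ decreases by $2$ each time $c$ increases by $1$, the set $D_i$ is the full arithmetic progression with step $2$ running between its extreme values. The minimum, attained at $c=\min\{a_i,b_i\}$, equals $a_i+b-2\min\{a_i,b_i\}=\max\{a_i-2b_i,-a_i\}+b$, while the maximum, attained at $c=\max\{0,b_i-t_i\}$, equals $a_i+b-2\max\{0,b_i-t_i\}=\min\{a_i,a_i+2(t_i-b_i)\}+b$. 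This is precisely the claimed description of $D_i$; the two simplifications are routine, e.g.\ $a_i-2b_i\ge -a_i\iff b_i\le a_i$ and $a_i\le a_i+2(t_i-b_i)\iff b_i\le t_i$.

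For the dichotomy I would use these same two equivalences together with the observation that $-a_i$ and $a_i$ are as small, respectively as large, as any element of $D_i-b$ can be: the lower endpoint $\max\{a_i-2b_i,-a_i\}$ is always $\ge -a_i$, and the upper endpoint $\min\{a_i,a_i+2(t_i-b_i)\}$ is always $\le a_i$. Hence $-a_i+b$ lies in $D_i$ iff it is the lower endpoint, i.e.\ iff $b_i\ge a_i$, and $a_i+b$ lies in $D_i$ iff it is the upper endpoint, i.e.\ iff $b_i\le t_i$. Therefore alternative (i) holds exactly when $b_i\ge a_i$ or $b_i\le t_i$, and fails exactly when $t_i<b_i<a_i$. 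In that remaining range both the maximum and the minimum are achieved by their nontrivial branches, so the endpoints become $a_i-2b_i$ and $a_i+2(t_i-b_i)=a_i-2(b_i-t_i)$, giving exactly the description in (ii). Since $t_i<b_i<a_i$ is the literal negation of ``$b_i\ge a_i$ or $b_i\le t_i$'', precisely one of (i), (ii) holds.

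The only real content is the counting step identifying the attainable $c$; everything after is bookkeeping with the $\max$/$\min$ expressions plus a parity check that the extreme layers $\pm a_i+b$ share the parity of the progression, which they do since every element of $D_i$ is congruent to $a_i+b\pmod 2$ and $-a_i\equiv a_i\pmod 2$. I expect the one point worth stating carefully to be the reduction of ``$-a_i+b\in D_i$'' and ``$a_i+b\in D_i$'' to endpoint equalities, which rests on the uniform bounds $-a_i\le\text{(lower endpoint)}$ and $\text{(upper endpoint)}\le a_i$ noted above.
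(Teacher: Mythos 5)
Your proposal is correct and takes essentially the same approach as the paper: the paper's one-line first part is precisely your computation $|\psi_B(A)|=a_i+b-2|A\cap B'|$ together with the range of attainable intersection sizes, and its second part is the same endpoint dichotomy, stated there in the contrapositive (if $-a_i+b\notin D_i$ then $b_i<a_i$, if $a_i+b\notin D_i$ then $t_i<b_i$). Your write-up simply makes explicit the counting and the endpoint comparisons that the paper leaves to the reader.
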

\begin{proof} The first part follows by checking how large $|A\Delta B|$ can be as   $A$ varies over the  sets from  $K_i\simeq K_{a_i+t_i}^{(a_i)}$, where $|B|=b$ and $|A\cap V(K_i)|=b_i$. 

For the second part suppose that (i) fails to hold. Now $-a_i+b\not\in D_i$ implies that $a_i-2b_i>-a_i$, and hence  $b_i<a_i$. Similarly since $a_i+b\not\in D_i$ we must have $t_i<b_i$. Hence  $D_i=[a_i-2b_i,a_i-2(b_i-t_i)]_2+b$.
\end{proof}

Given $S, \psi_B, b_1,\ldots,b_s,$ and $b$ as above define 
\[
E(b;b_1,b_2,\ldots,b_s)=\bigcup_{i=1}^s[\max\{a_i-2b_i,-a_i\},\min\{a_i,a_i+2(t_i-b_i)\}]_2+b.\] 
Since $\psi_B(S)=\psi_B(K_1)\cup\cdots \cup \psi_B(K_s)$, $\psi_B(S)$ will meet precisely those layers contained in $E(b;b_1,\ldots,b_s)$.

Thus the family of all possible sets of layers occupied by embeddings of $S$  depends on which values of $b$ and $b_1,\ldots,b_s$ can occur:
\[
W^*(S)=\{E(b;b_1,\ldots,b_s)\mid \exists B\subseteq [d], |B|=b, |B\cap K_i|=b_i, 1\leq i \le s\}.\]
Clearly each $b_i$ must satisfy $0\leq b_i \leq a_i+t_i$ and if the cliques in $S$ are vertex disjoint then all such values are possible. If, however, two cliques overlap, say $|V(K_i)\cap V(K_j)|=c\geq 1$, then $b_i\geq a_i+t_i-d \implies b_j \geq c-d$, so for example $b_i=a_i+t_i$ and $b_j=0$ is impossible. 

We can now prove Lemma \ref{nondisj:lem}.
\begin{proof}[Proof of Lemma \ref{nondisj:lem}]
Let $S$ be a vertex disjoint union of cliques that is not $t$-Ramsey. If $\hat{S}$ is any union of cliques with the same weights and orders as those in $S$ then by the above discussion we have $W'(\hat{S})\subseteq W'({S})$ (any choice of $b_1,\ldots b_s$ that can occur for an embedding of $\hat{S}$ can also occur for an embedding of ${S}$). Now if $\hat{S}$ is $t$-Ramsey then Lemma \ref{translate:lem} implies that $W'(\hat{S})$ is $t$-translate-Ramsey. But then $W'(\hat{S})\subset W'({S})$ so $W'(S)$ is $t$-translate-Ramsey and so $S$ is $t$-Ramsey, a contradiction.
\end{proof}
For the remainder of this section we will restrict attention to the case that $S$ is a  vertex disjoint unions of cliques. Note that in this case for any embedding we have $b=\sum_{i=1}^sb_i$, so we write $E(b_1,\ldots,b_s)$ for $E(b;b_1,\ldots,b_s)$.

Embeddings of $S$ in which $b_i\in \{0,a_i+t_i\}$ for each $1\leq i \leq s$ will play a special role and we call these \emph{principal embeddings} of $S$. We define
\[
P^*(S)=\{{E}(b_1,\ldots,b_s)\mid b_i\in \{0,a_i+t_i\}, 1\leq i \leq s\},\]
to denote those sets in $W^*(S)$ achieved by principal embeddings. Note that all $E\in P^*(S)$ are translates of sets of the form $\{x_1a_1,x_2a_2,\ldots , x_sa_s\}$, for some choice of signs $x_1,\ldots,x_s\in \{-1,+1\}$.

For example consider $S_1=[6]^{(4)}\cup [7,15]^{(8)}$. In this case the principal embeddings yield
\[
P^*(S_1)=\{\{4,8\}, \{2,14\}, \{1,13\}, \{7,11\}\}.\]  
We will let $P'(S)$ denote those sets from $W'(S)$ which are translates of sets from $P^*(S)$. So in this example we have \[
P'(S_1) =\{\{0,4\},\{0,12\}\}.\]
Note that a coloring $c$ of $\mathbb{Z}$ which alternates colors on the integers in each congruence class modulo 4 contains no monochromatic translate of either set in $P'(S_1)$.  However, while this coloring also contains no monochromatic translate of the set $\{4,12,14\}$ produced by the non-principal embedding obtained by taking $b_1 = 6$ and $b_2 = 2$, it does contain a monochromatic translate of the set $\{6,8\}$ produced by the non-principal embedding obtained by taking $b_1 = 0$ and $b_2 = 2$. Since $\{0,2\}$ and $\{0,4\}$ are both in $W'(S_1)$, $S_1$ is $2$-Ramsey. On the other hand, if $S_2=[6]^{(4)}\cup[7,16]^{(8)}$ then $P'(S_2)=P'(S_1)$ yet, as Theorem \ref{2cliques:thm} will show, $S_2$ is not $2$-Ramsey ($\{0,2\}$ is not in $W'(S_2)$).

Our next result tells us that if the sizes of the cliques are not too small compared to their weights we need only consider principal embeddings.
\begin{prop}\label{7:prop} If $S$ is as in Lemma \ref{embed:lem} with $t_i\geq a_i-1$ for each $i$ then $W'(S)=P'(S)$.
\end{prop}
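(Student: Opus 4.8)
The plan is to establish the two inclusions $P'(S)\subseteq W'(S)$ and $W'(S)\subseteq P'(S)$. The first holds by the very definition of $P'(S)$ as those members of $W'(S)$ coming from principal embeddings, so all the content lies in the reverse inclusion: every inclusion-minimal layer-set produced by an embedding of $S$ is in fact a translate of a principal one.

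The engine of the argument is that the hypothesis $t_i\geq a_i-1$ is exactly what rules out alternative (ii) of Lemma \ref{embed:lem}. Indeed (ii) demands an integer $b_i$ with $t_i<b_i<a_i$, which can exist only when $a_i-t_i\geq 2$, i.e. $t_i\leq a_i-2$. Hence under our hypothesis alternative (i) holds for every clique $K_i$ and every admissible choice of parameters, so each $D_i$ contains at least one of the two extreme layers $b-a_i$ or $b+a_i$. First I would record this and, for an arbitrary embedding with parameters $b_1,\ldots,b_s$ and $b=\sum_i b_i$, fix for each $i$ a sign $x_i\in\{-1,+1\}$ with $b+x_ia_i\in D_i$.

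Next I would extract the principal pattern sitting inside the embedding. Since $b+x_ia_i\in D_i$ for every $i$ and $E(b_1,\ldots,b_s)=\bigcup_i D_i$, the set $\{b+x_1a_1,\ldots,b+x_sa_s\}$ is contained in $E(b_1,\ldots,b_s)$. But this set is the translate by $b$ of $\{x_1a_1,\ldots,x_sa_s\}$, which is precisely the layer-set of the principal embedding realising the same sign choice (take $b_i=0$ when $x_i=+1$ and $b_i=a_i+t_i$ when $x_i=-1$); thus it is a translate of a member of $P^*(S)$. Consequently every set in $W^*(S)$ contains, up to translation, a subset lying in $P^*(S)$, and that principal subset is itself a member of $W^*(S)$.

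Finally I would pass from $W^*(S)$ to $W'(S)$. Each set of $W^*(S)$ now dominates, up to translation, a principal set that itself belongs to $W^*(S)$; so when we normalise to least element $0$ and discard non-minimal sets, any surviving set must equal a principal set it dominates. Hence every member of $W'(S)$ is a translate of a set from $P^*(S)$, i.e. lies in $P'(S)$, and together with $P'(S)\subseteq W'(S)$ this yields $W'(S)=P'(S)$. I expect the only delicate point to be this last bookkeeping step: a containment of integer sets need not survive naively once both sets are translated to have least element zero, so I would run the whole minimality argument in the ``translate of a subset'' partial order, the order in which (as observed in the proof of Lemma \ref{translate:lem}) removing supersets has no effect on $t$-translate-Ramseyness, and in which the conclusion is immediate.
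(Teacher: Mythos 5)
Your proof is correct and takes essentially the same route as the paper: the hypothesis $t_i\geq a_i-1$ rules out alternative (ii) of Lemma \ref{embed:lem}, so every set in $W^*(S)$ contains (up to translation) a signed principal set $\{x_1a_1,\ldots,x_sa_s\}$, and minimality then forces each member of $W'(S)$ to equal a member of $P'(S)$. The paper compresses this to two sentences; your extra care in running the minimality argument in the ``translate of a subset'' order is a sound elaboration of the same argument, not a departure from it.
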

\begin{proof}
No $b_i$ can satisfy the inequality in Lemma \ref{embed:lem} (ii). Hence each set in $W'(S)$ contains a set in $P'(S)$, so in fact must equal a set in $P'(S)$.\end{proof}

\subsection{Two cliques}
For integers $a,b,c$ we denote ``$a$ is congruent to $b$ modulo $c$'' by $a\equiv_{c} b$. We extend this in the obvious way to sets: e.g. $\{8,14\}\equiv_4 \{0,2\}$.

\begin{lemma}\label{2cliquesa:lem}If $S=K_1\cup K_2$ is the vertex disjoint union of two cliques of weights $a_1$ and $a_2$ and orders $a_1+t_1$ and $a_2+t_2$ respectively, with $a_1 = p_12^{r_1}$ and $a_2 = p_22^{r_2}$ where $t_1,t_2,r_1,r_2\geq 0$, $r_1\leq r_2$, and $p_1$, $p_2$ are odd then
\begin{itemize}
\item[(a)]  the reduced family of sets of layers of principal embeddings is
\[
P'(S)=\{\{0, |a_1 - a_2|\}, \{0, a_1 + a_2\}\}.
\]
\item[(b)] If $r_1 = r_2$ then S is $2$-Ramsey.
\item[(c)]  If $r_1 < r_2$ and $c$ is a $2$-coloring of $\mathbb{Z}$, then there is no monochromatic translate of either set in $P'(S)$ iff $c(x)\neq c(y)$ for all $x,y$ such that $|x-y| = d2^{r_1}$, where $d=\gcd(p_1,p_2)$.
\end{itemize}
\end{lemma}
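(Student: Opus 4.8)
The plan is to treat the three parts in order, extracting a single number-theoretic observation about pairs of alternating conditions that does the real work in (b) and (c). Throughout I would regard colours as elements of $\mathbb{Z}/2$, so that avoiding a monochromatic translate of $\{0,m\}$ is precisely the condition $c(x+m)=c(x)+1$ for all $x\in\mathbb{Z}$; I will call such a coloring \emph{$m$-alternating}, and note that no $0$-alternating coloring exists.

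For (a) I would simply enumerate the four principal embeddings. With $b_i\in\{0,a_i+t_i\}$ and $b=b_1+b_2$, Lemma \ref{embed:lem} places $K_i$ in the single layer $a_i+b$ (if $b_i=0$) or $-a_i+b$ (if $b_i=a_i+t_i$). Computing the resulting two-element layer sets and translating each to have smallest element $0$ yields $\{0,|a_1-a_2|\}$ from the two like-sign choices and $\{0,a_1+a_2\}$ from the two unlike-sign choices, giving the claimed $P'(S)$; when $a_1\neq a_2$ neither set contains a singleton arising from a non-principal embedding, so both survive the minimality reduction into $W'(S)$.

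The engine for (b) and (c) is the following claim, which I would prove first: for $m_1,m_2\geq 1$ with $g=\gcd(m_1,m_2)$, a coloring is both $m_1$- and $m_2$-alternating \emph{iff} $m_1/g$ and $m_2/g$ are both odd and the coloring is $g$-alternating. For the forward direction, $m_i$-alternation gives $c(x+km_i)=c(x)+k$, so B\'ezout's identity $um_1+vm_2=g$ yields $c(x+g)=c(x)+(u+v)$ and hence $c(x+m_i)=c(x)+(m_i/g)(u+v)$; comparing with $c(x+m_i)=c(x)+1$ forces $(m_i/g)(u+v)\equiv 1\pmod 2$, so each $m_i/g$ and $u+v$ are odd, whence $c$ is $g$-alternating. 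The reverse direction is immediate, since $m_i/g$ odd gives $c(x+m_i)=c(x)+(m_i/g)=c(x)+1$. Parts (b) and (c) then reduce to the $2$-adic data of $m_1=|a_1-a_2|$ and $m_2=a_1+a_2$. For (b), with $r_1=r_2=r$ I factor $m_1=2^{r+1}u$ and $m_2=2^{r+1}w$ where $u=(p_1-p_2)/2$, $w=(p_1+p_2)/2$; since $u+w=p_1$ is odd, exactly one of $u,w$ is odd, so $v_2(m_1)\neq v_2(m_2)$ and one of $m_i/g$ is even. By the claim no coloring is simultaneously $m_1$- and $m_2$-alternating, so $\{\{0,m_1\},\{0,m_2\}\}$, and therefore its superfamily $W^*(S)$, is $2$-translate-Ramsey, making $S$ $2$-Ramsey by Lemma \ref{translate:lem}. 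For (c), with $r_1<r_2$ I write $m_1=2^{r_1}q_1$ and $m_2=2^{r_1}q_2$ with $q_1=|p_1-p_2 2^{r_2-r_1}|$, $q_2=p_1+p_2 2^{r_2-r_1}$ both odd, so $v_2(m_1)=v_2(m_2)=r_1$, and then check $\gcd(q_1,q_2)=d$: each $q_i$ is divisible by $d=\gcd(p_1,p_2)$, while $\gcd(q_1,q_2)$ divides $\gcd(q_1+q_2,q_2-q_1)=\gcd(2p_1,2p_2 2^{r_2-r_1})=2d$ and is odd. Hence $g=\gcd(m_1,m_2)=d2^{r_1}$ with $m_i/g$ odd, and the claim gives that avoiding both principal sets is exactly being $d2^{r_1}$-alternating, i.e. $c(x)\neq c(y)$ whenever $|x-y|=d2^{r_1}$.

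The routine-but-delicate step is the $2$-adic bookkeeping in (b) and (c); the one genuinely load-bearing step is the claim, and within it the extraction of parities from $(m_i/g)(u+v)\equiv 1\pmod 2$, which is what simultaneously delivers the non-existence of a coloring in (b) and the exact reduced period in (c). The only case needing separate mention is $a_1=a_2$ (forcing $r_1=r_2$ and $p_1=p_2$), where $m_1=0$, the set $\{0,m_1\}=\{0\}$ is vacuously Ramsey, and $S$ is trivially $2$-Ramsey as a union of equal-weight sets.
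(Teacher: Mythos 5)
Your argument is correct, and it reorganizes the paper's proof rather than reproducing it. Part (a) is the same computation in both cases: the four principal embeddings give $P^*(S)=\{\{a_1,a_2\},\{t_1,a_1+a_2+t_1\},\{a_1+a_2+t_2,t_2\},\{a_2+t_1+t_2,a_1+t_1+t_2\}\}$, and translating to smallest element $0$ yields the two claimed sets. For (b) and (c), however, the paper uses two tailored arguments: in (b) the single witness integer $(a_1-a_2)(a_1+a_2)/2^{r_1+1}$, which is an odd multiple of one of $|a_1-a_2|$, $a_1+a_2$ and an even multiple of the other; in (c) a propagation of the ``odd multiple'' observation for one direction, and for the other an explicit B\'ezout pair $k,m$ of opposite parities used to construct a common point $z$ reached from $x$ by an odd number of $(a_1-a_2)$-steps and from $y$ by an even number of $(a_1+a_2)$-steps. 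You instead factor all of this through one reusable equivalence: $c$ is simultaneously $m_1$- and $m_2$-alternating iff $m_1/g$ and $m_2/g$ are both odd and $c$ is $g$-alternating, where $g=\gcd(m_1,m_2)$. The B\'ezout-parity step $(m_i/g)(u+v)\equiv 1 \pmod 2$ in your claim is exactly the arithmetic hidden inside the paper's two witnesses, so the substance is the same; what your packaging buys is a cleaner (c), where both directions fall out of one statement together with the computation $\gcd(|a_1-a_2|,a_1+a_2)=d2^{r_1}$ (which you verify correctly), and a reduction of (b) to the mismatch of $2$-adic valuations $v_2(|a_1-a_2|)\neq v_2(a_1+a_2)$ when $r_1=r_2$. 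What the paper's version buys is brevity and fully explicit witnesses. Your separate treatment of $a_1=a_2$ (where $\{0\}$ is trivially translate-Ramsey and $S$ is $2$-Ramsey as a family of equal-sized sets) is a legitimate point the paper leaves tacit.

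One side remark of yours is false, though harmlessly so. In (a) you assert that when $a_1\neq a_2$ no non-principal embedding produces a singleton layer set, so both pairs survive minimality into $W'(S)$. If some $t_i=0$ and $a_1\equiv a_2 \pmod 2$ this fails: e.g.\ with $a_1=3$, $t_1=0$, $a_2=1$, taking $b_1=1$, $b_2=0$ gives $D_1=\{a_1-2b_1+b\}=\{2\}=D_2$, so $\{0\}\in W'(S)$ and neither two-element set is minimal there. This is why (a) must be read, as the paper's proof implicitly does, as the reduction of the principal family $P^*(S)$ alone, not as a statement about membership in $W'(S)$. Nothing downstream is affected: your uses of (a) in (b) and (c) only require that the two sets are translates of sets in $W^*(S)$, which holds regardless of minimality.
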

\begin{proof}
By definition
\[
P^*(S)=\{\{a_1,a_2\}, \{t_1, a_1 + a_2 + t_1\}, \{a_1 + a_2 + t_2, t_2\}, \{a_2 + t_1 + t_2,a_1 + t_1 + t_2\}\}\]
so (a) follows immediately.

(b) If $r_1=r_2$ then, since $\{p_1+p_2,p_1-p_2\}\equiv_4 \{0,2\}$,  the integer \[
\frac{(a_1-a_2)(a_1+a_2)}{2^{r_1+1}}=\frac{p_1-p_2}{2}(a_1+a_2)=\frac{(p_1+p_2)}{2}(a_1-a_2)\] is an odd multiple of one of $a_1+a_2$ and $|a_1-a_2|$, and an even multiple of the other. 
Hence any $2$-coloring of $\mathbb{Z}$ must contain a monochromatic translate of one of the sets in $P'(S)$.

(c) If $r_1 < r_2$ then both $|a_1-a_2|$ and $a_1+a_2$ are odd multiples of $d2^{r_1}$.  Now if $c(x)\neq c(y)$ for all $x,y$ such that $|x-y|=d2^{r_1}$ then $c(x)\neq c(y)$ for all $x,y$ such that $|x-y|$ is an odd multiple of $d2^{r_1}$. Hence there is no monochromatic translate of either set in $P'(S)$. Conversely, suppose $c(x)=c(y)$ for some $x,y$ with $x-y=d2^{r_1}$.  If $r_1 < r_2$ then $\gcd(|a_1-a_2|, a_1+a_2) = d2^{r_1}$, so there exist integers $k$ and $m$, one even and one odd, such that $(a_1+a_2)k - (a_1-a_2)m = d2^{r_1}$. By symmetry we may suppose $k$ is even and $m$ is odd. Now if $z=x+(a_1-a_2)m = y+(a_1+a_2)k$ then $|z-x|$ is an odd multiple of $|a_1-a_2|$, and  $|z-y|$ is an even multiple of $a_1+a_2$. Since $c(x)=c(y)$  there must be a monochromatic translate of a set from $P'(S)$.
\end{proof}

\begin{lemma}\label{2cliquesb:lem} For each positive integer $m$ divisible by $4$, there exists a $2$-coloring $c$ of $\mathbb{Z}$ such that $c(x)\neq c(y)$ for all $x,y$ with $|x-y| = m$, and $c(z)=c(z+2)=c(z+4)$ does not occur for any $z$.\end{lemma}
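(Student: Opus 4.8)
The plan is to exploit the fact that both requirements compare only integers of the same parity, reduce to a single cleaner problem on $\mathbb{Z}$, and then solve that problem with one explicit formula that works uniformly in $m$.

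First I would observe that since $4\mid m$ the distance $m$ is even, so any $x,y$ with $|x-y|=m$ have the same parity, and of course each triple $z,z+2,z+4$ lies in one parity class. Hence the even integers and the odd integers can be colored entirely independently. Reparametrising each class by $2k\mapsto k$ (and $2k+1\mapsto k$) and writing $m'=m/2$, both tasks reduce to the \emph{same} problem: find $g:\mathbb{Z}\to\{0,1\}$ with $g(k)\neq g(k+m')$ for all $k$ and with no three consecutive integers receiving the same $g$-value. The crucial arithmetic features to record here are that $m'$ is even and $m'\geq 2$ (as $m\geq 4$); I will then use the same $g$ on both classes, setting $c(n)=g(\lfloor n/2\rfloor)$.

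For the reduced problem I would propose
\[
g(k)=\Bigl(\bigl\lfloor k/m'\bigr\rfloor + k\Bigr)\bmod 2,
\]
which partitions $\mathbb{Z}$ into blocks of length $m'$, gives each block a base parity that flips from one block to the next, and alternates within each block. Verifying $g(k)\neq g(k+m')$ is immediate: passing from $k$ to $k+m'$ raises $\lfloor k/m'\rfloor$ by exactly $1$ and raises $k$ by the even number $m'$, so the sum flips parity. For the run-length condition I would check that $g(k+1)=g(k)$ holds exactly when $\lfloor(k+1)/m'\rfloor-\lfloor k/m'\rfloor=1$, i.e. precisely at the block boundaries $m'\mid(k+1)$, while consecutive values differ everywhere else. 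A run of three equal values would force $g(k)=g(k+1)=g(k+2)$, hence two adjacent boundaries $m'\mid(k+1)$ and $m'\mid(k+2)$, which is impossible for $m'\geq 2$.

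The only real obstacle I anticipate is making one construction work uniformly for every $m$ divisible by $4$ (equivalently every even $m'$). A naive periodic string such as $0011$ has the right run-lengths and satisfies the anti-periodicity $g(k)\neq g(k+m')$ only when $m'\equiv 2\pmod 4$, and breaks down when $4\mid m'$. The point of the extra summand $k$ in the formula is to separate the within-block alternation from the block length, so that anti-periodicity depends only on the parity of $m'$; this is what lets a single formula succeed for all even $m'$, and hence for all $m$ with $4\mid m$.
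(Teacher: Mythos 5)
Your proposal is correct and, despite the formulaic packaging, constructs exactly the paper's coloring: since $\bigl\lfloor\lfloor n/2\rfloor/m'\bigr\rfloor=\lfloor n/m\rfloor$, your $c(n)=g(\lfloor n/2\rfloor)=\bigl(\lfloor n/2\rfloor+\lfloor n/m\rfloor\bigr)\bmod 2$ is precisely the period-$2m$ coloring $RRBB\ldots RRBB$ on $[0,m-1]$ followed by its complement on $[m,2m-1]$ that the paper exhibits. The paper merely states this coloring and asserts the two properties, while you verify them explicitly via the parity reduction, so the approach is essentially the same.
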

\begin{proof}  The period $2m$ coloring obtained by taking $RRBBRRBB\ldots RRBB$ on $[0,m-1]$, then taking the complement of these colors on $[m,2m-1]$, and so on, satisfies the required properties.
\end{proof}
\begin{theorem}\label{2cliques:thm}  Let $S=K_1\cup K_2$ be the vertex disjoint union of two cliques of weights $a_1$ and $a_2$ and orders $a_1+t_1$ and $a_2+t_2$ respectively, with $a_1 = p_12^{r_1}$ and $a_2 = p_22^{r_2}$ where $t_1,t_2,r_1,r_2\geq 0$, $p_1$, $p_2$ are odd integers, and $r_1\leq r_2$. Then S is $2$-Ramsey iff at least one of the following is satisfied
\begin{itemize}
\item[(1)] $r_1=r_2$;
\item[(2)] at least one of $t_1$ or $t_2$ is equal to 0, and $a_1$ and $a_2$ are both even;
\item[(3)] $t_1$ or $t_2$ is equal to $1$, and $2 \leq r_1 < r_2$.
\end{itemize}
\end{theorem}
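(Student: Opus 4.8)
The plan is to work entirely with $W'(S)$, using Lemma~\ref{translate:lem} to reduce ``$S$ is $2$-Ramsey'' to ``$W'(S)$ is $2$-translate-Ramsey''. Part (1) is immediate: if $r_1=r_2$ then $S$ is $2$-Ramsey by Lemma~\ref{2cliquesa:lem}(b), so for the rest I assume $r_1<r_2$. Writing $m=d2^{r_1}$, Lemma~\ref{2cliquesa:lem}(c) tells me that a $2$-coloring $c$ of $\mathbb{Z}$ avoids every translate of a set in $P'(S)$ precisely when $c$ is \emph{anti-periodic mod $m$}, i.e. $c(x)\neq c(x+m)$ for all $x$. Hence $S$ is $2$-Ramsey iff every anti-periodic-mod-$m$ coloring contains a monochromatic translate of some \emph{non-principal} set in $W'(S)$. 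Two structural facts will drive everything: by Lemma~\ref{embed:lem} the layers $D_i$ occupied by $\psi_B(K_i)$ form a single step-$2$ arithmetic progression, whose length exceeds $1$ exactly when $0<b_i<a_i+t_i$; and, since $r_1<r_2$, the weight $a_1$ is an \emph{odd} multiple of $m$ while $a_2$ is an \emph{even} multiple of $m$.

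\textbf{The Ramsey cases (2) and (3).} For (2) I use that one clique is a single vertex ($t_i=0$), which can be slid to any layer: keeping the other clique in a principal (single) layer, the single vertex realises \emph{every} even gap in the interval $[\,|a_1-a_2|,\,a_1+a_2\,]$ as a two-element set of $W'(S)$. Since both weights are even, $m$ is even, and this interval has length $2\min(a_1,a_2)\ge 2m$, so it contains a multiple of $2m$; as the endpoints are odd multiples of $m$, this multiple is interior and even, hence realised. A gap that is an even multiple of $m$ is monochromatic under \emph{every} anti-periodic-mod-$m$ coloring, so $W'(S)$ is $2$-translate-Ramsey. For (3) I spread the clique with $t_i=1$ into a gap-$2$ pair $\{\ell,\ell+2\}$ and place the other clique principally so that the whole configuration lies in two residue classes $\{r,r+2\}$ modulo $2m$; then a monochromatic copy is forced exactly when $c(z)=c(z+2)$ for some $z$. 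The point is that when $m\equiv 0\pmod 4$ (that is, $r_1\ge2$) no anti-periodic-mod-$m$ coloring can also satisfy $c(z)\neq c(z+2)$ for all $z$, since the latter forces $c(x+m)=c(x)$; thus $c(z)=c(z+2)$ somewhere and the copy appears. Choosing the spread (case (i) or (ii) of Lemma~\ref{embed:lem}) to land in the required residues is a short, if fiddly, calculation, with the only tight instance (when the spreading clique has weight exactly $m$) rescued by the boundary choice $b_i=1$.

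\textbf{The non-Ramsey cases.} When none of (1)--(3) holds and $r_1<r_2$, the three possibilities are: $r_1=0$; $r_1=1$ with all $t_i\ge1$; and $r_1\ge2$ with all $t_i\ge2$. In each I will exhibit one coloring avoiding \emph{all} of $W'(S)$, using that any non-principal embedding has some $D_i$ of length $\ge2$. If $r_1=0$ then $a_1$ is odd and $a_2$ even, so in every embedding $D_1$ and $D_2$ have opposite parities; the alternating coloring $c(x)=x\bmod 2$ is monochromatic only on constant-parity sets and so misses every embedding (and misses $P'(S)$, whose gaps are odd). If $r_1=1$ then both weights are even and $m=2d$; I take $c(x)=\lfloor x/2\rfloor\bmod 2$, which is anti-periodic mod $2$ and anti-periodic mod $m$: the former makes every step-$2$ progression of length $\ge2$ bichromatic (killing all non-principal sets), the latter kills $P'(S)$. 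If $r_1\ge2$ then anti-periodicity mod $2$ is incompatible with mod $m$, so instead I take the coloring of Lemma~\ref{2cliquesb:lem} (anti-periodic mod $m$, with no $c(z)=c(z+2)=c(z+4)$). Any $D_i$ of length $\ge3$ would force three equal consecutive same-parity layers, which this coloring forbids; and since all $t_i\ge2$, the only embeddings with every $D_i$ of length $\le2$ are the ``case (i)'' ones in which each clique contributes a layer at $\pm a_i+b$ --- two such layers differ by an odd multiple of $m$ and so receive opposite colors. Hence all of $W'(S)$ is avoided.

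\textbf{Main obstacle.} I expect the non-Ramsey direction to be the crux: a single explicit coloring must dodge the whole family of non-principal embeddings at once, and the correct device changes precisely at $r_1=2$. For $m\equiv 2\pmod4$ (and for $m$ odd) the ``no two equal at distance $2$'' coloring suffices, but for $m\equiv0\pmod4$ it is impossible alongside anti-periodicity mod $m$, and one must drop to the weaker ``no three-in-a-row'' coloring of Lemma~\ref{2cliquesb:lem}, clearing away the residual length-$2$ configurations by the odd-multiple-of-$m$ argument. This same threshold is what powers the Ramsey side: the hypothesis $r_1\ge2$ in (3) is exactly the condition under which a gap-$2$ configuration becomes unavoidable.
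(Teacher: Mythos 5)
Your proposal is correct, and although it runs on the same chassis as the paper's proof (Lemma \ref{translate:lem}, Lemma \ref{2cliquesa:lem}(b),(c), Lemma \ref{2cliquesb:lem}, and the dichotomy of Lemma \ref{embed:lem}), your treatment of the Ramsey cases (2) and (3) takes a genuinely different route. The paper proves (2) with just the two gaps $a_2-a_1$ and $a_2-a_1+2$ and a parity-toggle argument, and it splits (3) into two subcases: when the spreading clique has the \emph{larger} weight it overlays the other clique's principal layer to get the bare pair $\{(a_1+a_2)/2,(a_1+a_2)/2+2\}\in W^*(S)$, but when the spreading clique has the \emph{smaller} weight it must work with the $3$-element sets $A_i=\{0,2,a_2-a_1+2i\}$ and run an iterative amplification producing $k(a_1-2)+2$ consecutive same-color, same-parity integers, impossible for large $k$. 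Your observation that anti-periodicity mod $m$ forces $2m$-periodicity replaces both: in (2), any realized gap divisible by $2m$ is monochromatic at \emph{every} translate, and the interval $[\,|a_1-a_2|,a_1+a_2\,]$ of realized even gaps has length $2\min(a_1,a_2)\geq 2m$ with endpoints that are odd multiples of $m$, so such a gap exists; in (3), taking $b_i\equiv m/2\pmod m$ (e.g.\ $b_i=m/2$, legal since $2\leq m/2\leq a_i-1$ as $m\geq 4$ and $m\mid a_i$) places the principal point of the other clique congruent mod $2m$ to the spread pair --- one checks $a_2-a_1+m\equiv 0\pmod{2m}$ because $a_1/m$ is odd and $a_2/m$ is even, and symmetrically with the roles swapped --- so the repeat $c(z)=c(z+2)$, which is forced since strict alternation would give $c(x+4)=c(x)$ and hence $c(x+m)=c(x)$ when $4\mid m$, yields a monochromatic $3$-point translate in one stroke, uniformly in which clique spreads and which weight is larger. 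This is cleaner than the paper's growth argument; the paper's version, by contrast, needs no modular placement and exhibits explicitly which sets in $W^*(S)$ do the work. Your non-Ramsey direction (parity coloring for $r_1=0$; $\lfloor x/2\rfloor\bmod 2$ for $r_1=1$, $t_i\geq1$; the Lemma \ref{2cliquesb:lem} coloring plus the ``case (i) forces a layer at $\pm a_i+b$'' dichotomy for $r_1\geq2$, $t_i\geq2$) coincides with the paper's. Two small repairs: your setup claim that $|D_i|>1$ ``exactly when'' $0<b_i<a_i+t_i$ fails for $t_i=0$, where $D_i$ is always a singleton (indeed this sliding singleton is what your case (2) exploits), so state it only for $t_i\geq1$; and the ``tight instance'' $a_i=m$ needs no rescue by $b_i=1$, since the pairs $\{a_i-2b_i,a_i-2b_i+2\}$ for $b_i\in[1,a_i]$ already sweep all even residues mod $2m$ when $a_i=m$.
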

\begin{proof}
If (1) is satisfied then $S$ is $2$-Ramsey by Lemma \ref{2cliquesa:lem} (b).  Assume $r_1<r_2$ and that (2) is satisfied, say with $t_1=0$ and $a_1 < a_2$.  The sets $\{0, a_2 - a_1 +2\}$ (by taking $b = b_1 = 1$) and $\{0, a_2 - a_1\}$ are both in $W'(S)$.  If $x$ is any integer such that $c(x-2)$ is not equal to $c(x)$, then $x + a_2 - a_1$ has the same color as $x$ or $x-2$, so there is a monochromatic translate of a set in $W'(S)$.  The argument is virtually the same if $t_2=0$ or $a_2<a_1$.
 
Now suppose (3) is satisfied.  If $c$ is a $2$-coloring of $\mathbb{Z}$ with no monochromatic translate of either set in $P'(S)$, then it must have the form prescribed in Lemma \ref{2cliquesa:lem} (c), so there exist integers $x$ and $y$ with opposite colors such that $y-x$ is a positive multiple of $4$.  That means there exists an integer $z \in [x,y]_2$ such that $c(z)=c(z+2)$.  Now there are four cases.
 
If $a_2 < a_1$ and $t_1=1$ then the set $\{(a_1 + a_2)/2, (a_1 + a_2)/2 + 2\}$ is in $W^*(S)$ (take $b = b_1 = |a_1 - a_2|/2$), and there is a monochromatic translate of this set. An identical argument works if $a_1<a_2$ and $t_2=1$.
 
If $a_1 < a_2$ and $t_1=1$ then for each $i \in [a_1]$, a translate of the set $A_i = \{0, 2, a_2 - a_1 +2i\}$ is in $W^*(S)$ (take $b = b_1 = i$).  This means that if $c(0)=c(2)=R$, to avoid a red translate of some set $A_i \in W^*(S)$, all of the integers in $[a_2 - a_1 + 2, a_2 + a_1]_2$ must be blue.  For each consecutive pair of blue integers in this set of size $a_1$, to avoid a blue translate of some set $A_i$, there must be a set of $a_1$ consecutive red even integers.  Taking their union forces every integer in $[2(a_2 - a_1 +2), 2(a_2 + a_1-1)]_2$ to be red.  Thus at this second stage we have $2a_1-2$ consecutive red integers of the same parity. Continuing this process, at the $k$th stage there must be $k(a_1-2)+2$ consecutive integers of the same parity with the same color. Since $a_1\geq 4$, this cannot be true for large $k$. An identical argument works if $a_2<a_1$ and $t_2=1$. Hence $S$ is $2$-Ramsey.

Conversely, assume that $S$ does not satisfy (1), (2), or (3).  If $a_1$ and $a_2$ have different parities then every member of $W^*(S)$ contains numbers of different parity, and thus $S$ is not $2$-Ramsey. So we can assume $1 \leq r_1 < r_2$.  If $r_1 = 1$ then we take a coloring which alternates colors on the even integers and on the odd integers.  Since both $a_1 - a_2$ and $a_1 + a_2$ are odd multiples of $2$, there is no monochromatic translate of either set in $P'(S)$.  Since both $t_1$ and $t_2$ are positive, each non-principal embedding contains two integers whose difference is $2$, so these cannot be monochromatic.  Hence $S$ is not $2$-Ramsey.  
 
Now assume that $2\leq r_1<r_2$ and so $t_1,t_2\geq 2$.  By Lemma \ref{2cliquesb:lem} there exists a coloring $c$ of the type prescribed in Lemma \ref{2cliquesa:lem} (c) such that $c(z)=c(z+2)=c(z+4)$ does not occur for any $z$.  Since $t_1,t_2\geq 2$, any set in $W'(S)\setminus P'(S)$ contains a translate of $\{0,2,4\}$, so there is no monochromatic translate of such a set.  Furthermore, since $a_1 - a_2$ and $a_1 + a_2$ are both odd multiples of $d2^{r_1}$, there is no monochromatic translate of either set in $P'(S)$.  Hence $S$ is not $2$-Ramsey.      
\end{proof}

If $S=K_{a_1+t_1}^{(a_1)}\cup K_{a_2+t_2}^{(a_2)}$ is the union of two cliques which are not vertex disjoint then Lemma \ref{nondisj:lem} tells us that for $S$ to be $2$-Ramsey it must have the same parameters as a vertex disjoint union of cliques that is $2$-Ramsey. In fact we can say more and state the following theorem without proof.
\begin{theorem}
If $S=K_{a_1+t_1}^{(a_1)}\cup K_{a_2+t_2}^{(a_2)}$ is the union of two cliques whose vertex sets overlap in $c\geq 1$ points, $a_1>a_2$, $t_1,t_2\geq 2$ then 
\begin{itemize}
\item[(i)] If $c\geq 3$ then $S$ is not $2$-Ramsey.
\item[(ii)] If $c=2$ then $S$ is $2$-Ramsey iff there exists a positive integer $m$ such that $a_1-a_2=4m$ and $a_1+a_2\equiv 2$ mod $8m$.
\item[(iii)] If $c=1$ then $S$ is $2$-Ramsey iff there exists a positive integer $m$ such that $a_1-a_2=4m$ and $a_1+a_2 \equiv 0,2,4m-2$ or $4m+4$ mod $8m$, or there is an even integer $m$ such that $a_1-a_2=4m$ and $a_1+a_2\equiv 6$ or $8m-4$ mod $8m$.
\end{itemize}
\end{theorem}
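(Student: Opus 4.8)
The plan is to reduce, exactly as in the vertex-disjoint case, to a statement about monochromatic translates of integer sets, and then to run the case analysis in $c$. By Lemma~\ref{translate:lem}, $S$ is $2$-Ramsey iff $W'(S)$ is $2$-translate-Ramsey, and by the discussion preceding Lemma~\ref{nondisj:lem} the family $W^*(S)$ is read off from the formula for $E(b;b_1,b_2)$ by ranging over the \emph{realisable} pairs $(b_1,b_2)$. Parametrising $B$ by its intersections with $V(K_1)\cap V(K_2)$, with $V(K_1)\b V(K_2)$, and with $V(K_2)\b V(K_1)$ shows that $(b_1,b_2)$ is realisable precisely when $b_j\geq b_i-(a_i+t_i)+c$ for $\{i,j\}=\{1,2\}$. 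The first thing I would record is that this rules out the two opposite-sign principal embeddings $(a_1+t_1,0)$ and $(0,a_2+t_2)$, so that $P'(S)=\{\{0,a_1-a_2\}\}$ and the configuration $\{0,a_1+a_2\}$ is lost; the two same-sign principal embeddings survive and both yield $\{0,a_1-a_2\}$.

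Next I would compute $W'(S)$ by feeding the realisable pairs into Lemma~\ref{embed:lem} and reducing to minimal sets. The lost configuration $\{0,a_1+a_2\}$ is replaced by the smallest set obtained on pushing $b_1$ (or, symmetrically, $b_2$) to its extreme value subject to $b_j\geq b_i-(a_i+t_i)+c$, and the shape of this replacement is controlled by $c$. I expect the outcome: for $c=1$ the new minimal set is $\{0,a_1+a_2-2,a_1+a_2\}$; for $c=2$ it is $\{0,2,a_1+a_2-2,a_1+a_2\}$; and for $c\geq 3$ every replacement is a step-$2$ progression with at least four terms and so contains a translate of $\{0,2,4\}$. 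As in the proof of Theorem~\ref{2cliques:thm}, the remaining members of $W'(S)\b P'(S)$ likewise contain a translate of $\{0,2,4\}$ because $t_1,t_2\geq 2$. Checking that these really are the minimal sets — that nothing strictly smaller survives near $\{0,a_1+a_2\}$ and that the listed sets are not reducible through some other embedding — is the most delicate bookkeeping in the argument.

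Case (i) then follows quickly: for $c\geq 3$ the only irreducible constraints are $\{0,a_1-a_2\}$ and sets containing a translate of $\{0,2,4\}$, so it suffices to exhibit a $2$-colouring of $\mathbb Z$ with no monochromatic translate of $\{0,a_1-a_2\}$ and no monochromatic step-$2$ triple. Colour by parity if $a_1,a_2$ have opposite parity; use the period-$4$ colouring $RRBB\ldots$ if $a_1-a_2\equiv 2 \pmod 4$; and use the colouring of Lemma~\ref{2cliquesb:lem} if $4\mid a_1-a_2$. Each avoids every set of $W'(S)$, so $S$ is not $2$-Ramsey. The parity and period-$4$ colourings moreover show that $4\mid a_1-a_2$ is necessary for $2$-Ramseyness in cases (ii) and (iii), which is why both are stated with $a_1-a_2=4m$.

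It remains to settle, for $c\in\{1,2\}$ and $a_1-a_2=4m$, exactly when the new small set is unavoidable. Here the mechanism is clean. Any colouring avoiding $\{0,a_1-a_2\}$ satisfies $c(x)\neq c(x+4m)$, hence has period $8m$; it also cannot be free of \emph{defects} (pairs $z$ with $c(z)=c(z+2)$), since a defect-free colouring has period $4$ and then $c(x)=c(x+4m)$. For $c=2$ and $a_1+a_2\equiv 2 \pmod{8m}$ one has $a_1+a_2-2\equiv 0 \pmod{8m}$, so a monochromatic translate of $\{0,2,a_1+a_2-2,a_1+a_2\}$ amounts to the mere presence of a defect, which is forced; thus $S$ is $2$-Ramsey, while for every other residue I would place the defects periodically, as in Lemma~\ref{2cliquesb:lem}, to avoid all translates. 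The case $c=1$, with the $3$-element set $\{0,a_1+a_2-2,a_1+a_2\}$, is the same in spirit but finer: one tracks a forced defect against its antiperiodic partner a distance $a_1+a_2$ away, and the several surviving residues modulo $8m$ (with the parity-of-$m$ subtlety) record exactly when such an alignment is forced. The hard part is this final number-theoretic bookkeeping, and upstream of it the exact determination of $W'(S)$ in the second step.
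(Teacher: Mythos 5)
The paper offers no proof to compare against: this theorem is explicitly ``stated without proof,'' so your proposal must be judged on its own. Your framework is the right one and consistent with the paper's machinery: the realisability criterion $b_j\geq b_i-(a_i+t_i)+c$ is correct (parametrise $B$ by $\beta_0=|B\cap V(K_1)\cap V(K_2)|$ etc.\ and check when the interval for $\beta_0$ is nonempty), it correctly kills the opposite-sign principal embeddings so that $P'(S)=\{\{0,a_1-a_2\}\}$, and your case (i) colorings and the forward direction of (ii) (a defect $c(z)=c(z+2)$ is forced in any coloring with $c(x)\neq c(x+4m)$, and when $a_1+a_2\equiv 2 \bmod 8m$ a defect yields a monochromatic translate of $\{0,2,a_1+a_2-2,a_1+a_2\}$) check out. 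Two corrections to your determination of $W'(S)$, though. First, for $c=1$ there are \emph{two} mirror-image minimal replacements, $\{0,a_1+a_2-2,a_1+a_2\}$ (from $(b_1,b_2)=(a_1+t_1,1)$ or $(1,a_2+t_2)$) and $\{0,2,a_1+a_2\}$ (from $(a_1+t_1-1,0)$ or $(0,a_2+t_2-1)$); a translate-avoiding coloring must dodge both, and your write-up carries only one. Second, your claim that for $c\geq 3$ ``every replacement is a step-$2$ progression with at least four terms'' is false as stated: $(b_1,b_2)=(a_1+t_1,c)$ gives $\{0\}\cup[a_1+a_2-2c,\,a_1+a_2]_2$, a singleton plus a progression, and when $a_2=1$ the cross embeddings give the three-element set $\{0,a_1-1,a_1+1\}$ with no translate of $\{0,2,4\}$ at all. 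The conclusion of (i) survives because that exceptional set contains a translate of $\{0,a_1-a_2\}$, but the dichotomy you need is: every set in $W'(S)$ contains a translate of $\{0,a_1-a_2\}$ or of $\{0,2,4\}$, and this requires the case check you glossed over.

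The genuine gap is case (iii), which is the actual content of the theorem and which you defer as ``number-theoretic bookkeeping.'' It is doable by your method, but it is not routine: writing $N=a_1+a_2$, any candidate coloring has $c(x)\neq c(x+4m)$, hence period $8m$ and antiperiod $4m$ on each parity class, has no monochromatic step-$2$ triple, and must avoid monochromatic translates of both $T_1=\{0,N-2,N\}$ and $T_2=\{0,2,N\}$. One then argues residue by residue: at $N\equiv 0$ or $2 \bmod 8m$ a forced defect alone gives a monochromatic $T_2$ or $T_1$; at $N\equiv 4m-2$ or $4m+4$ one needs the further (true) observation that no-$\{0,2,4\}$ forces every defect to be immediately followed, and preceded, by a color change, which combined with antiperiodicity forces a monochromatic translate; at $N\equiv 6$ the constraints from $T_2$ and the $3$-term progressions propagate from any defect to force the pattern $XX\overline{X}\overline{X}$ of period $8$ on each parity class, which is compatible with $c(x)\neq c(x+4m)$ iff $m$ is odd --- exactly the parity-of-$m$ split in the statement --- and for all remaining residues one must exhibit explicit periodic colorings (your Lemma \ref{2cliquesb:lem}-style coloring with isolated defects spaced $4m$ apart handles most of them, as it does in case (ii)). None of this is carried out in your proposal, and without it the ``iff'' in (iii), and the completeness of the residue list, is unverified; so what you have is a correct and well-aimed reduction with cases (i) and (ii) essentially done, but not yet a proof of the full statement.
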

\subsection{Three cliques}
If the disjoint union of $s$ cliques of different weights is $t$-Ramsey, then
clearly the disjoint union of any $s'$ of them, for any $s'<s$, is $t$-Ramsey as
well.  The converse obviously does not hold in general, so the following
result is rather surprising.
\begin{theorem}\label{3cliques:thm}
A vertex disjoint union of three cliques of pairwise distinct weights is $2$-Ramsey iff the union of each pair of the cliques is $2$-Ramsey.
\end{theorem}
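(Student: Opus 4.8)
The plan is to treat the two implications separately via the reduction to integer colorings (Lemma \ref{translate:lem}). Write $S=K_1\cup K_2\cup K_3$ with $K_i$ of weight $a_i=p_i2^{r_i}$ ($p_i$ odd) and order $a_i+t_i$, and recall from Lemma \ref{embed:lem} that, since the cliques are vertex disjoint, the block of layers occupied by $K_i$ depends only on $b_i$, while the whole embedding is then translated by $b=b_1+b_2+b_3$. The forward implication is immediate: any embedding of $S$ restricts to an embedding of each $K_i\cup K_j$, so every member $E=D_1\cup D_2\cup D_3$ of $W^*(S)$ contains the subset $D_i\cup D_j$, which is a translate (by $b_k$) of a member of $W^*(K_i\cup K_j)$. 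Hence a coloring avoiding monochromatic translates of some pair's family also avoids $W^*(S)$, so if a pair were not $2$-Ramsey then neither would $S$ be; by Lemma \ref{translate:lem} this is exactly ``$S$ $2$-Ramsey $\Rightarrow$ every pair $2$-Ramsey''.

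For the converse I would argue directly: assume all three pairs are $2$-Ramsey and deduce that $S$ is. To prove $S$ is $2$-Ramsey it suffices to exhibit a subfamily of $W'(S)$ that is $2$-translate-Ramsey, since enlarging the family can only help. The first step is to use Theorem \ref{2cliques:thm} to see how strongly the hypothesis constrains the parameters. In particular, if two of the cliques $K_i,K_j$ satisfy $t_i,t_j\ge 2$ then none of the conditions (1)--(3) of Theorem \ref{2cliques:thm} can hold unless $r_i=r_j$; so whenever every $t_i\ge 2$ the hypothesis forces $r_1=r_2=r_3$.

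This isolates the main case. When $r_1=r_2=r_3=r$, I would show that the principal family $P^*(S)$ alone is $2$-translate-Ramsey (non-principal embeddings can only help). Each member of $P^*(S)$ is a translate of $\{x_1a_1,x_2a_2,x_3a_3\}$ with $x_i\in\{-1,+1\}$; since $a_i\equiv 2^{r}\pmod{2^{r+1}}$, every pairwise difference is divisible by $2^{r+1}$, and after dividing out this common factor $P^*(S)$ becomes a fixed family of three-element sets depending only on the odd parts $p_1,p_2,p_3$. I would then prove, generalizing the parity computation of Lemma \ref{2cliquesa:lem}(b), that this three-element family is $2$-translate-Ramsey. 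The remaining patterns are those in which some $t_i\in\{0,1\}$; here $K_i$ contributes extra near-principal blocks (a single free layer of weight in $[-a_i,a_i]_2$ when $t_i=0$, or the short sets of the form $\{0,2,\dots\}$ used in the proof of Theorem \ref{2cliques:thm}(2),(3) when $t_i=1$), and I would attach the third clique at a same-coloured layer using precisely these extra sets, mirroring the interval and ``stage'' arguments of that proof (and invoking Lemma \ref{2cliquesb:lem} to discard the genuinely non-principal sets, which all contain a translate of $\{0,2,4\}$, whenever $t_i\ge 2$).

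The main obstacle is the principal equal-valuation case: passing from pairs to triples requires a \emph{sign-consistency} argument, since a monochromatic copy of $K_i\cup K_j$ only fixes the relative sign $x_ia_i-x_ja_j\in\{\pm(a_i-a_j),\pm(a_i+a_j)\}$, and one must show the third clique can always be placed at a same-coloured layer compatibly with a single global translate. Getting the mechanism of Lemma \ref{2cliquesa:lem}(b), which forces a monochromatic translate in \emph{every} colouring, to cooperate with the prescribed-colouring constructions of Theorem \ref{2cliques:thm} in the mixed patterns is the delicate point, and I expect it to require tracking the colouring simultaneously on the residue classes modulo $2^{r_3+1}$.
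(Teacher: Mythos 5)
Your forward implication is fine and matches the paper's (the paper dispatches it with one sentence), and your structural reductions are sound: restriction to $P'(S)$ is legitimate when proving Ramseyness, Theorem \ref{2cliques:thm} does force $r_1=r_2=r_3$ when all $t_i\geq 2$, and dividing out the common power of $2$ is a valid translate-Ramsey reduction. But there is a genuine gap exactly where you flag ``the main obstacle'': you assert that the principal three-element family is $2$-translate-Ramsey ``generalizing the parity computation of Lemma \ref{2cliquesa:lem}(b)'', and no direct generalization of that computation works. The pair argument is powered by the fact that avoiding monochromatic translates of a \emph{two}-element set $\{0,m\}$ forces strict alternation, $c(x)\neq c(x+m)$ for all $x$, giving total control of the coloring along each residue class; avoiding translates of a \emph{three}-element set $\{0,u,v\}$ only forbids one of eight color patterns on each triple and forces no alternation whatsoever, so no parity/odd-versus-even-multiple contradiction is available. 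This is precisely the sign-consistency problem you name but do not solve.

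The paper's resolution is a different mechanism you would need: the propagation lemmas (Lemmas \ref{3cliques2:lem}--\ref{3cliques4:lem}). There, one first uses the pair result inside the residue classes $Z_i$ modulo $2e$ to find two same-colored integers at distance $a_2-a_3$ or $a_2+a_3$, and then plays all four sign patterns $\{x_1a_1,x_2a_2,x_3a_3\}$ against each other to force an infinite alternating chain ($C(0)=C(a_2-a_3)=R$ forces $C(a_1-a_3)=C(a_1+a_2)=B$, which forces $C(2a_1)=C(2a_1+a_2-a_3)=R$, etc.), concluding that \emph{any} avoiding coloring is periodic with period $2a_1$ (resp.\ $2a_3$, $2a_2$). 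In the case $r_1=r_2=r_3$ all three lemmas apply, the period divides $\gcd(2a_1,2a_2,2a_3)$, and the set $\{0,a_2-a_3,a_1-a_3\}$ is then automatically monochromatic. Note also that your sketch of the remaining patterns ($t_i\in\{0,1\}$) underestimates what is needed: the paper's Cases 2--5 still invoke the periodicity lemmas before the $t_i=1$ sets such as $\{-a_2,j,j+2,a_1\}$ can force alternation on the even integers, and the final contradiction there uses that $a_1,a_2,a_3$ are all divisible by $4$; moreover Lemma \ref{2cliquesb:lem} is a tool for the \emph{negative} direction and plays no role in establishing Ramseyness. So the proposal is a reasonable skeleton, but the central step of the converse is missing and the method you propose for it would fail.
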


Due to the various possibilities for the structure of each pair of two of
the three cliques (Theorem \ref{2cliques:thm} (1),(2),(3)), a complete proof of Theorem \ref{3cliques:thm} would
be long.  The main idea of our proof is to assume that the union of each
pair of two of the three cliques in $S$ is $2$-Ramsey, and then show that the
only possible coloring of the integers with no monochromatic translate of
any set in $W'(S)$ is periodic, with a short period.  It is then easy to
show that no such coloring exists.

\begin{lemma}\label{3cliques2:lem}
 Let $a_1,a_2,a_3$ be integers with $a_1>a_2>a_3$ such that $a_2$ and $a_3$
have the same number of factors of $2$ in their prime factorizations.  Let $t_1, t_2, t_3$ be nonnegative integers, and let $S=K^{(a_1)}_{a_1+t_1}\cup K^{(a_2)}_{a_2+t_2}\cup K^{(a_3)}_{a_3+t_3}$ be a vertex disjoint union of cliques.  Any $2$-coloring of the
integers with no monochromatic translate of any set in $W'(S)$ is periodic with
period $2a_1$.
\end{lemma}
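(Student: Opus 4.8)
The plan is to squeeze the principal embeddings for all they are worth, thereby reducing the statement to a single clean assertion, and only then to bring in the non-principal embeddings and the hypothesis that $a_2$ and $a_3$ share their number of factors of $2$. Throughout I fix a $2$-colouring $c$ with no monochromatic translate of any set in $W'(S)$ and encode it as $\sigma:\mathbb{Z}\to\{+1,-1\}$. Since every member of $W^*(S)$ contains a translate of some minimal set in $W'(S)$, the colouring $c$ has no monochromatic translate of any set in $W^*(S)$ either; in particular no reduced principal set in $P^*(S)$ has a monochromatic translate.

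The first step is the following observation. A short inspection of the $2^3$ sign patterns $\{\pm a_1,\pm a_2,\pm a_3\}$ (using $a_1>a_2>a_3>0$) shows that the eight reduced principal $3$-sets are exactly the four $3$-element subsets of $P=\{0,a_2+a_3,a_1+a_3,a_1+a_2\}$ together with the four $3$-element subsets of $Q=\{0,a_2-a_3,a_1-a_3,a_1+a_2\}$. Hence, for every $j$, no three of the four points of $j+P$ may receive a common colour, and the same holds for $j+Q$; with only two colours this forces each of $j+P$ and $j+Q$ to be split exactly $2$--$2$. In $\pm 1$ notation this reads
\[
\sigma(j)+\sigma(j+a_2+a_3)+\sigma(j+a_1+a_3)+\sigma(j+a_1+a_2)=0,
\]
for all $j$, together with the identity obtained by replacing $a_3$ by $-a_3$. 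Subtracting the two identities and shifting shows that $g(m):=\sigma(m+a_1)+\sigma(m+a_2)$ is $2a_3$-periodic, while the first identity alone gives $\sigma(m)+\sigma(m+a_1+a_2)=-g(m+a_3)$, so this function is $2a_3$-periodic as well.

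The second step is bookkeeping. Writing $h(m):=\sigma(m+2a_3)-\sigma(m)\in\{-2,0,2\}$, the two periodicities translate into $h(m+a_1-a_2)=-h(m)$ and $h(m+a_1+a_2)=-h(m)$; combining these shows $h$ is $2a_1$-periodic. Substituting $h(m+2a_1)=h(m)$ back into the definition of $h$ yields that $H(m):=\sigma(m+2a_1)-\sigma(m)$ is $2a_3$-periodic. Thus it only remains to prove $H\equiv 0$, which is precisely the assertion that $\sigma$ has period $2a_1$.

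The final step, which I expect to be the main obstacle, is to establish $H\equiv 0$, and this is where both the non-principal embeddings and the hypothesis $v_2(a_2)=v_2(a_3)$ are essential: the principal sets are asymmetric (they single out $a_3$ through the differences $\pm a_3$), so they cannot by themselves produce the symmetric information needed. My intention is to use Lemma \ref{embed:lem} to let one clique spread across a short interval $[\,\cdot,\cdot\,]_2$ while keeping the other two principal; this creates additional small members of $W^*(S)$ (in particular sets realising adjacent layers, at difference $2$) and hence further ``not all equal'' constraints invisible to the principal analysis. Feeding these into the mechanism of Lemma \ref{2cliquesa:lem}(b) for the pair $(a_2,a_3)$ — whose whole force comes from $a_2$ and $a_3$ having equally many factors of $2$ — I would derive the ``$a_2$-flavoured'' companion periodicity, namely that $\sigma(\,\cdot+2a_2)-\sigma(\,\cdot)$ is $2a_1$-periodic, which forces $H$ to be $2a_2$-periodic as well. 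Then $H$ is periodic with period dividing $\gcd(2a_2,2a_3)=2\gcd(a_2,a_3)$, and a final parity argument, again driven by $v_2(a_2)=v_2(a_3)$, collapses the $\{-2,0,2\}$-valued $H$ to $0$. The genuine difficulty here is twofold: making the collapse precise rather than merely getting a small common period, and handling the dependence on the clique sizes, since the relevant non-principal embeddings exist only for certain ranges of the $t_i$ (cf.\ Proposition \ref{7:prop} and the delicate $t_i=1$ cases of Theorem \ref{2cliques:thm}); I would organise the finish as a case analysis on these ranges.
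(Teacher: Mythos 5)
Your Steps 1 and 2 are correct and verifiable: the eight reduced principal $3$-sets are indeed, up to translation, the four $3$-element subsets of $P=\{0,a_2+a_3,a_1+a_3,a_1+a_2\}$ together with the four $3$-element subsets of $Q=\{0,a_2-a_3,a_1-a_3,a_1+a_2\}$; the forced $2$--$2$ splits give your two identities, and the bookkeeping with $g$, $h$ and $H$ is sound, so you have genuinely proved, from principal sets alone, that $H(m)=\sigma(m+2a_1)-\sigma(m)$ is $2a_3$-periodic. But the proof stops there. Step 3 is a declared intention, not an argument, and it is exactly where all the content of the lemma lives: the hypothesis that $a_2$ and $a_3$ have the same number of factors of $2$ is never used in Steps 1--2 (they are valid for arbitrary $a_1>a_2>a_3$), so $H\equiv 0$ cannot follow from what you have without a genuinely new input. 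As submitted, the proposal is incomplete.

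Moreover, the route you sketch for Step 3 rests on a false premise. You assert that the principal sets ``cannot by themselves produce the symmetric information needed'' and plan to bring in non-principal embeddings via Lemma \ref{embed:lem}, with a case analysis over the $t_i$. But the lemma is asserted for \emph{all} nonnegative $t_i$, and when $t_i\geq a_i-1$ for each $i$, Proposition \ref{7:prop} gives $W'(S)=P'(S)$: every non-principal embedding then yields a set of layers containing a translate of a principal set, so forbidding the principal translates already forbids everything, and no extra constraints of the kind you want (adjacent layers at difference $2$, etc.) exist. Any correct proof must therefore close using principal sets only, and the paper's proof does exactly that. It is seeded by the parity pigeonhole from the proof of Lemma \ref{2cliquesa:lem}(b) --- this is precisely where the hypothesis on $a_2,a_3$ enters: within each residue class mod $2e$, $e=\gcd(a_1,a_2,a_3)$, one cannot $2$-color so that every pair at distance $a_2-a_3$ and every pair at distance $a_2+a_3$ is bichromatic, since $(a_2-a_3)(a_2+a_3)/2^{r_2+1}$ is an odd multiple of one of these distances and an even multiple of the other. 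Given, say, $C(0)=C(a_2-a_3)=R$, translates of the principal sets $\{a_3,a_2,a_1\}$, $\{-a_2,-a_3,a_1\}$, $\{-a_3,a_2,a_1\}$, $\{-a_2,a_3,a_1\}$ then propagate this rigidly: entire residue classes mod $2a_1$ become monochromatic, stepping by $a_2\pm a_3$ preserves class-monochromaticity, and since $\gcd(a_2+a_3,a_2-a_3)=2\gcd(a_2,a_3)$ and $\gcd(2a_1,2\gcd(a_2,a_3))=2e$, every class $\equiv 2je \pmod{2a_1}$ is monochromatic, giving period $2a_1$ on each class mod $2e$. If you want to complete your (otherwise attractive) algebraic framework, the missing move is this pigeonhole-plus-propagation exploiting $r_2=r_3$, not additional embeddings; as written, your Step 3 would fail outright in the regime $t_i\geq a_i-1$.
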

\begin{proof}
Let $C$ be any $2$-coloring of the integers with no monochromatic translate of any
set in $W'(S)$. Let $e= \gcd(a_1,a_2,a_3)$ and for each $0\leq i \leq 2e-1$ let $Z_i$ be the set of all integers congruent to $i$ mod $2e$. By the proof of Lemma \ref{2cliquesa:lem}(b) there must be two integers in $Z_0$ with the same color
whose difference is $a_2-a_3$ or $a_2+a_3$.  Assume it is the former, say
$C(0)=C(a_2-a_3)=R$.  Then $C(a_1-a_3)=C(a_1+a_2)=B$ to avoid red translates of $\{a_3,a_2,a_1\}$ and $\{-a_2,-a_3,a_1\}$ respectively. (Note that $a_1-a_3$ and $a_1+a_2$ have the same color and their difference is $a_2+a_3$. If we had instead assumed two integers with difference $a_2+a_3$ are both $R$, then two integers with difference $a_2-a_3$ would be $B$.)  Then $C(2a_1)=C(2a_1+a_2-a_3)=R$ (to avoid blue
translates of $\{-a_3,a_2,a_1\}$ and $\{-a_2,a_3,a_1\}$ respectively).  Continuing,
$C(3a_1-a_3)=C(3a_1+a_2)=B$, and so on (in both directions) so that all integers
congruent to $0$ or $a_2-a_3$ $\mod 2a_1$ are colored $R$, and all integers congruent to $a_1-a_3$ or
$a_1+a_2$ $\mod 2a_1$ are colored $B$. 

Now consider any integer $m$ colored $R$ by the above argument.  Then every integer congruent to $m$ mod $2a_1$ is colored $R$, and if
$C(m + a_2 - a_3) = R$ then, by the same argument as above every integer congruent to $m+a_2-a_3\mod 2a_1$ is also colored $R$. This in turn implies that if $C(m+a_2-a_3)=B $  then all integers congruent to $m+a_2-a_3$ $\mod 2a_1$  are colored $B$ (since if any of them were red they would all be red). Thus all integers congruent to $m+a_2-a_3\mod 2a_1$ have the same color. Similarly all integers congruent to $m+a_2+a_3\mod 2a_1$ must have the same color. Continuing in this way we see that for any fixed integers $x,y$ the set of integers congruent to $x(a_2-a_3)+y(a_2+a_3)\mod 2a_1$ all have the same color (of course for some values of $x$ and $y$ the color is $B$, for others it is $R$). In particular if $d=\gcd(a_2-a_3,a_2+a_3)$ and $j$ is any fixed integer then all integers congruent to $jd \mod 2a_1$ have the same color (and all these integers are in $Z_0$).

 Now $d=\gcd(a_2+a_3,a_2-a_3) = 2\gcd(a_2,a_3)$,  so $2e=\gcd(2a_1,d)$. Hence for each fixed integer
$j$, all integers congruent to $2je$ $\mod 2a_1$ have the same color.   So we have
shown that the coloring $C$ is periodic with period $2a_1$ on $Z_0$.  The same argument can be applied to $Z_i$ for each $1\leq i \leq 2e-1$, showing that $C$ has period $2a_1$ on the integers.
\end{proof}

\begin{lemma}\label{3cliques3:lem} 
Let $a_1,a_2,a_3$ be integers with $a_1>a_2>a_3$ such that $a_1$ and $a_2$
have the same number of factors of $2$ in their prime factorizations.  Let $t_1, t_2, t_3$ be nonnegative integers, and let $S=K^{(a_1)}_{a_1+t_1}\cup K^{(a_2)}_{a_2+t_2}\cup K^{(a_3)}_{a_3+t_3}$ be a vertex disjoint union of cliques.  Any $2$-coloring of the
integers with no monochromatic translate of any set in $W'(S)$ is periodic with
period $2a_3$.
\end{lemma}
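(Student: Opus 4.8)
The plan is to follow the architecture of the proof of Lemma~\ref{3cliques2:lem}, but with the three weights permuted so that the pair of equal $2$-adic valuation is $(a_1,a_2)$ and the smallest weight $a_3$ plays the role of the ``step size''; the output is then period $2a_3$ in place of $2a_1$. Fix a $2$-coloring $C$ of $\mathbb{Z}$ with no monochromatic translate of any set in $W'(S)$, set $e=\gcd(a_1,a_2,a_3)$, and for $0\le i\le 2e-1$ let $Z_i$ be the integers congruent to $i$ modulo $2e$. Since $a_1$ and $a_2$ have the same number of factors of $2$, both $a_1-a_2$ and $a_1+a_2$ are divisible by $2e$, and the quotients $(a_1-a_2)/d$, $(a_1+a_2)/d$ (with $d=\gcd(a_1-a_2,a_1+a_2)$) have opposite parity; hence the counting argument behind Lemma~\ref{2cliquesa:lem}(b) applies \emph{within each class $Z_i$} and produces two integers of $Z_i$ of the same color differing by $a_1-a_2$ or $a_1+a_2$. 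Working first in $Z_0$ and using the symmetry between the two difference types, we may assume $C(0)=C(a_1-a_2)=R$.

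The crux is a propagation chain that reproduces this red pair shifted by $2a_3$, using $a_3$ as the free coordinate of the three-element principal sets (each signed set $\{\pm a_1,\pm a_2,\pm a_3\}$ has three distinct entries and so is, up to translation, a minimal member of $W'(S)$, hence forbidden monochromatically). The translate $\{a_1-a_2,0,a_3-a_2\}$ of $\{a_1,a_2,a_3\}$ and the translate $\{0,a_1-a_2,a_1+a_3\}$ of $\{-a_1,-a_2,a_3\}$ each contain the red pair, forcing $C(a_3-a_2)=C(a_1+a_3)=B$; note these two blue points differ by $a_1+a_2$. Now the translate $\{a_1+a_3,a_3-a_2,2a_3\}$ of $\{a_1,-a_2,a_3\}$ and the translate $\{a_3-a_2,a_1+a_3,(a_1-a_2)+2a_3\}$ of $\{-a_1,a_2,a_3\}$ each contain this blue pair, forcing $C(2a_3)=C((a_1-a_2)+2a_3)=R$. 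Thus a red pair at distance $a_1-a_2$ recurs $2a_3$ to the right (and, symmetrically, to the left), so every integer congruent to $0$ or $a_1-a_2$ modulo $2a_3$ is red.

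To pass from this single thread to genuine period $2a_3$ I would copy the second half of the proof of Lemma~\ref{3cliques2:lem} with $a_1-a_2,a_1+a_2$ in the roles of $a_2-a_3,a_2+a_3$. For any integer $m$ forced red above, its whole class modulo $2a_3$ is red, and examining the pairs $(m,m+(a_1-a_2))$ and $(m,m+(a_1+a_2))$ together with the engine (in both difference types) shows that the classes of $m+(a_1-a_2)$ and $m+(a_1+a_2)$ modulo $2a_3$ are each monochromatic; iterating, every class $\,x(a_1-a_2)+y(a_1+a_2)\,$ modulo $2a_3$ is monochromatic. These are exactly the classes $\,jd\,$ modulo $2a_3$, and since $\gcd(2a_3,d)=2\gcd(a_3,\gcd(a_1,a_2))=2e$, every class $\,2je\,$ modulo $2a_3$ is monochromatic; that is, $C$ is periodic with period $2a_3$ on $Z_0$. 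Running the same argument inside each $Z_i$ gives period $2a_3$ on all of $\mathbb{Z}$.

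I expect the propagation chain to be the only genuinely new point, and the main obstacle. One cannot simply relabel the proof of Lemma~\ref{3cliques2:lem}, because the ordering $a_1>a_2>a_3$ dictates which internal differences of the signed sets $\{\pm a_1,\pm a_2,\pm a_3\}$ are realizable, and the whole scheme only closes up to a shift of $2a_3$ (rather than $2a_1$) if $a_3$ is chosen as the free coordinate and the alternation runs red(distance $a_1-a_2$)$\to$blue(distance $a_1+a_2$)$\to$red(distance $a_1-a_2$). The work is to check that the four displayed translates meet the relevant red or blue pair in exactly two points, and that no coincidence among the values $0,\,a_1-a_2,\,a_3-a_2,\,a_1+a_3,\,2a_3$ degenerates a three-element set into fewer points; everything after that is inherited from Lemma~\ref{3cliques2:lem}.
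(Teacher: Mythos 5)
Your proposal is correct and follows essentially the same route as the paper: the authors prove this lemma by the exact propagation chain you give (a same-colored pair at distance $a_1-a_2$ forces $C(a_3-a_2)=C(a_1+a_3)$ via red translates of $\{a_3,a_2,a_1\}$ and $\{-a_1,-a_2,a_3\}$, then $C(2a_3)=C(a_1-a_2+2a_3)$, and so on), with the second half inherited verbatim from their proof of Lemma~\ref{3cliques2:lem} including the computation $\gcd(2a_3,d)=2e$. Your explicit verification of the four translates and the non-degeneracy of the point set simply fills in details the paper leaves to the reader.
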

\begin{lemma}\label{3cliques4:lem} 
Let $a_1,a_2,a_3$ be integers with $a_1>a_2>a_3$ such that $a_1$ and $a_3$
have the same number of factors of $2$ in their prime factorizations.  Let $t_1, t_2, t_3$ be nonnegative integers, and let $S=K^{(a_1)}_{a_1+t_1}\cup K^{(a_2)}_{a_2+t_2}\cup K^{(a_3)}_{a_3+t_3}$ be a vertex disjoint union of cliques.  Any $2$-coloring of the
integers with no monochromatic translate of any set in $W'(S)$ is periodic with
period $2a_2$.
\end{lemma}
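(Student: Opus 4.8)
The plan is to mirror the proof of Lemma~\ref{3cliques2:lem} almost verbatim, with the three cliques permuted: here $K_1$ and $K_3$ are the pair with equal $2$-adic valuation, so they play the role that $K_2,K_3$ played there, and the ``free'' clique whose doubled weight becomes the period is now the \emph{middle} clique $K_2$. I would let $C$ be a $2$-coloring of $\mathbb{Z}$ with no monochromatic translate of any set in $W'(S)$; since the sets of $W'(S)$ are minimal, this also rules out a monochromatic translate of any set in $W^*(S)$, and in particular of any principal set $\{x_1a_1,x_2a_2,x_3a_3\}$ with $x_i\in\{-1,+1\}$. Setting $e=\gcd(a_1,a_2,a_3)$ and letting $Z_i$ be the integers congruent to $i$ modulo $2e$, I would note that $a_1\pm a_3$ are multiples of $2e$ (since $a_1,a_3$ share a valuation $\geq v_2(e)$), so the proof of Lemma~\ref{2cliquesa:lem}(b) applied to the restriction of $C$ to $Z_0$ produces two integers of $Z_0$ of the same color at distance $a_1-a_3$ or $a_1+a_3$. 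As in Lemma~\ref{3cliques2:lem} the two cases are symmetric, so I would assume $C(0)=C(a_1-a_3)=R$.

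The heart of the argument is the bootstrapping step, and this is exactly where the lemma differs from Lemma~\ref{3cliques2:lem}: because the free clique $K_2$ is the middle one ($a_1>a_2>a_3$) rather than the largest, adjoining $\pm a_2$ to the red pair does \emph{not} immediately produce a monochromatic pair at the complementary paired distance — a single adjunction only yields blue points whose mutual distance is $2a_2$, which is useless for further forcing (a principal set contains just one $a_2$-term, so no sub-pair of a principal set has distance $2a_2$). I would therefore use all four relevant sign patterns and their mirrors. From the red pair $\{0,a_1-a_3\}$ the sets $\{a_3,a_2,a_1\}$ and $\{-a_1,-a_2,-a_3\}$ force $C(a_2-a_3)=C(a_1-a_2)=B$, while $\{-a_2,a_3,a_1\}$ and $\{-a_1,-a_3,a_2\}$ force $C(-(a_2+a_3))=C(a_1+a_2)=B$. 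The crucial observation is that among these four blue points the pair $\{a_2-a_3,\,a_1+a_2\}$ has difference exactly $a_1+a_3$, the \emph{other} paired distance. Applying the same device to this blue pair, using $\{-a_3,a_2,a_1\}$ and $\{-a_1,a_3,a_2\}$ (whose sub-pairs at distance $a_1+a_3$ align with it), then forces $C(2a_2)=C(2a_2+a_1-a_3)=R$. Thus the red pair $\{0,a_1-a_3\}$ reproduces a red pair translated by exactly $2a_2$.

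Iterating this cycle in both directions, together with the color-symmetric version for the blue classes, would show that every integer congruent to $0$ or $a_1-a_3$ modulo $2a_2$ is red and the complementary ladder is blue, so $C$ is periodic with period $2a_2$ along these residues. The remainder is identical to Lemma~\ref{3cliques2:lem}: from any fixed red integer the same forcing shows that translation by $a_1-a_3$ and by $a_1+a_3$ carries each residue class modulo $2a_2$ to a class of constant color, so every class congruent to $x(a_1-a_3)+y(a_1+a_3)$ modulo $2a_2$ is monochromatic. Since $\gcd(a_1-a_3,a_1+a_3)=2\gcd(a_1,a_3)$ and $\gcd\!\big(2a_2,\,2\gcd(a_1,a_3)\big)=2\gcd(a_1,a_2,a_3)=2e$, every residue of $Z_0$ is reached, giving period $2a_2$ on $Z_0$; applying the argument to each $Z_i$ with $1\leq i\leq 2e-1$ then yields period $2a_2$ on all of $\mathbb{Z}$.

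The step I expect to be the main obstacle is precisely this bootstrap: confirming that the four forced blue points genuinely contain a pair at the complementary distance $a_1+a_3$, and that closing the cycle back to red reproduces a translate of the original by $2a_2$ (and not by $2a_1$ or some other amount). This has to be checked by explicitly tracking the sorted order of each signed triple $\{x_1a_1,x_2a_2,x_3a_3\}$, because the fact that $a_2$ lies strictly between $a_3$ and $a_1$ is exactly what alters the geometry relative to Lemma~\ref{3cliques2:lem}. Once this single step is verified, the periodicity ladder and the gcd bookkeeping carry over unchanged.
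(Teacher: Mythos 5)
Your proposal is correct and takes essentially the same route as the paper's own (sketched) proof: the paper likewise starts from a same-colored pair $C(0)=C(a_1-a_3)=R$, forces $C(a_2-a_3)=C(a_1+a_2)=B$, deduces $C(2a_2)=C(a_1+2a_2-a_3)=R$, and then iterates and applies the gcd/periodicity argument of Lemma~\ref{3cliques2:lem}. Your ``bootstrap'' observation --- that one must combine blue points from the two different anchors to obtain a pair at the complementary distance $a_1+a_3$ before the cycle closes with a shift of $2a_2$ --- is precisely the pivot the paper's sketch relies on, and your sign-pattern verifications check out.
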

The proofs of Lemmas \ref{3cliques3:lem} and \ref{3cliques4:lem} are similar to that of Lemma \ref{3cliques2:lem}.  For Lemma
\ref{3cliques3:lem}, just as in the proof of Lemma \ref{3cliques2:lem}, there exist two integers with a
difference of $a_1 - a_2$ which must be the same color, say $C(0)=C(a_1-a_2)=R$.  Then $C(a_3 - a_2)=C(a_1 + a_3)=B$ (to avoid red translates of
$\{a_3,a_2,a_1\}$ and $\{-a_1,-a_2,a_3\}$ respectively).  Then $C(2a_3)=C(a_1 - a_2+ 2a_3)=R$, and so on, eventually showing that the coloring on $Z_i$ has period $2a_3$, for each $0\leq i\leq 2e-1$. For Lemma \ref{3cliques4:lem}, just as in the proof of Lemma \ref{3cliques2:lem}, there exist two integers
with a difference of $a_1-a_3$ which have the same color.  If $C(0)=C(a_1-a_3)=R$ then
$C(a_2-a_3)=C(a_1+a_2)=B$, so then $C(2a_2)=C(a_1+2a_2-a_3)=R$, and so on.

\begin{proof}[Proof of Theorem \ref{3cliques:thm}]
Suppose $a_1>a_2>a_3$ and $a_1 = p_12^{r_1}$, $a_2 = p_22^{r_2}$, $a_3 = p_32^{r_3}$ with $p_1,p_2,p_3$ odd and $r_1,r_2,r_3\geq 0$. Let $S=K^{(a_1)}_{a_1+t_1}\cup K^{(a_2)}_{a_2+t_2}\cup K^{(a_3)}_{a_3+t_3}$  be a vertex disjoint union of cliques, with $t_1$, $t_2$, $t_3\geq 0$.

Clearly if any pair of the cliques in $S$ is not $2$-Ramsey, then neither is
$S$.  So we just need to show that if each pair of cliques is $2$-Ramsey
then so is $S$.

Case 1.  $r_1 = r_2 = r_3$

By the above lemmas, any $2$-coloring of the integers
which does not have a monochromatic translate of any set in $W'(S)$ has period
$2a_1$, has period $2a_2$, and has period $2a_3$.  Hence it has period $d$ where
$d=\gcd(2a_1,2a_2,2a_3)$.  Thus there is a monochromatic translate of the
set $\{0,a_2-a_3,a_1-a_3\}$ from $W'(S)$, since $a_2-a_3$ and $a_1-a_3$ are multiples of $d$ (in fact there
are monochromatic translates of every set from $P'(S)$).

Case 2.  $r_1 = r_2\neq r_3$ and either
(i) $t_3 = 1$ and $r_1,r_2,r_3\geq 2$;
(ii) $t_1 = t_2 = 1$ and $r_1,r_2,r_3 \geq 2$;
(iii) $t_3 = 0$ or one of $t_1$ and $t_2$ is $0$ and the other is at most $1$ (with restrictions on the exponents according to Theorem \ref{2cliques:thm})

By Lemma \ref{3cliques3:lem}, if there is a $2$-coloring $C$ of the integers with no monochromatic translate of any set in $W'(S)$ then $C$ has period $2a_3$.  For subcase
(i), since $t_3=1$, $\{-a_2,j,j+2,a_1\}$ and $\{j,j+2,a_2,a_1\}$ are both translates of sets in $W'(S)$ for all $j\in [-a_3,a_3-2]_2$. As in the proof of Lemma \ref{3cliques2:lem} there exist two
integers with difference $a_1-a_2$ with the same color, say $R$.  Due to the $a_3$
forbidden sets containing $a_2$ listed above, no two consecutive even
integers in the period $2a_3$ coloring $C$ can be colored $R$. As in the  the proof
of Lemma \ref{3cliques2:lem}, there also exist two integers with difference $a_1 + a_2$
with the other color, $B$, so due to the $a_3$ forbidden sets containing
$-a_2$ listed above, no two consecutive even integers can be colored $B$.  That means $C$ must alternate colors on the
even integers. However, then $\{a_3,a_2,a_1\}$ is monochromatic because $a_1,a_2,a_3$ are
all multiples of $4$.

For subcase (ii), since $t_1 = t_2 = 1$, the set $\{0,2\}$ is in $W'(S)$: to see this take an automorphism given by flippling $(a_1-a_3)/2$ coordinates in the first clique and $(a_2-a_3)/2$ coordinates in the second clique. So the
only way to avoid a monochromatic translate is to alternate colors on the even
integers which, as in subcase (i), produces a monochromatic translate after all.

The proof of subcase (iii) is similar (but easier).

Case 3.  $r_1 = r_3\neq r_2$.  Subcase (i) is exactly as in Case 2.  For
subcase (ii), since $t_1 = t_3 = 1$, a translate of each of the sets
$\{j,j+2,a_2\}$ and $\{-a_2,j,j+2\}$ is in $W'(S)$ for each $j\in [-a_3,a_3 - 2]_2$,
and now an argument identical to the one in the proof of Theorem \ref{2cliques:thm} (3) for
the case $a_1 < a_2$ and $t_1 = 1$, produces a monochromatic translate after
all.

Case 4. $r_2 = r_3\neq  r_1$.  Almost identical to Case 3.

Case 5.  $r_1, r_2, r_3$ all distinct and greater than or equal to $2$, at
least two of $t_1,t_2,t_3$ equal to $0$ or $1$.   If $t_1$ and $t_2$ are equal to $0$
or $1$ then the set $\{0,2\}$ is in $W'(S)$ and we can finish as in Case 1(ii).
If $t_1 = t_3=1$  then translates of $\{j,j+2,a_2\}$ and $\{-a_2,j,j+2\}$ are in $W'(S)$
for each $j\in [-a_3,a_3-2]_2$.  We know that any coloring candidate has two
consecutive even integers with the same color, say $0$ and $2$ are colored $R$, so
$[a_2-a_3+2,a_2+a_3]_2$ is all $B$, so $[-2a_3+2,2a_3-2]_2$ is all R, and so on, producing
arbitrarily long sequences of consecutive even integers with the same
color, an impossibility.  The other possibilities in Case 5 are similar.
\end{proof}

\subsection{Arbitrary unions of cliques}
There is no analogue of Theorem \ref{3cliques:thm} for the disjoint union of four cliques of
different weights.  For example, if $S$ is the disjoint union of cliques of
weights $1$,$5$,$7$,$9$ then, no matter what the orders of the cliques may be, $S$
is $2$-Ramsey (this can be verified rather laboriously by hand by considering the 16 sets in $P'(S)$).  However, if $S$ is the disjoint union of cliques of
weights $1$, $5$, $7$, $11$,  and if the orders are large enough so that $W'(S)=P'(S)$, (so by Proposition \ref{7:prop}, orders at least 1, 9, 13, 21 respectively)
then $S$ is not $2$-Ramsey.  The period $38$ coloring of the integers obtained
by repeating the sequence $RRRRBBBRRBRBRBRRBBB$ on the even integers, and on
the odd integers, has no monochromatic translate of any of the $13$
sets in $P'(S)$ (in fact these colorings are the only colorings of the
integers with no monochromatic translate of any set in $W'(S)$).  By Theorem
\ref{3cliques:thm} the disjoint union of any three of these four cliques is $2$-Ramsey, no
matter what the orders of the cliques may be.

Which disjoint unions of $s$ cliques are not $2$-Ramsey, but the disjoint
union of any $s-1$ of the cliques is 2-Ramsey?  By Theorem \ref{3cliques:thm}, none with
$s=3$.  By our next result, none if s is sufficiently large.

If $S$ is the vertex disjoint union of $s$ cliques $ K_{a_1+t_1}^{(a_1)},\ldots, K_{a_s+t_s}^{(a_s)}$ where $a_i$ is odd and $t_i \leq 1$ for each $i$, then $S$ is $2$-Ramsey.  This is so
because $\{0,2\}\in W'(S)$ (take $b_i = (a_i + 1)/2$ for each $i$), but the
only $2$-coloring of the integers with no monochromatic translate of $\{0,2\}$, is one that
alternates colors on the even integers and so contains a  monochromatic translate of
the set $\{x_1a_1,\ldots,x_sa_s\}$ obtained by letting $x_i = 1$ if $a_i\equiv  1 \mod 4$, and $x_i=-1$ if $a_i \equiv 3 \mod 4$, since any pair of elements from this set differ by a multiple of $4$. 

Our final result (Theorem \ref{lll:thm}) tells us that if we require $t_i\geq 2$ for each
$i$ then, for sufficiently large $s$, the vertex disjoint union of $s$ cliques
of different weights cannot be $2$-Ramsey.  First we show that to prove this
we need only consider configurations $S$ where $a_i$ is odd for all $i$.

\begin{prop}\label{pari:prop}
Let $S=K_{a_1+t_1}^{(a_1)}\cup \cdots\cup K_{a_s+t_s}^{(a_s)}$, be a vertex disjoint union of $s$ cliques. For a positive integer $m$, let $S^{m}=K_{e_1+u_1}^{(e_1)}\cup\cdots\cup K_{e_s+u_s}^{(e_s)}$, be a vertex disjoint union of $s\geq 2$ cliques with $e_i= ma_i$ for each $i$.
\begin{itemize}
\item[(a)] If $u_i \geq 2$ for each $i$ and $S^{m}$ is $2$-Ramsey, then so is $S$ (for all values of the $t_i$'s).
\item[(b)] If $t_i \geq a_i - 1$ for each $i$ and $S$ is $2$-Ramsey, then so is $S^{m}$ (for all values of the $u_i$'s).
\end{itemize}
\end{prop}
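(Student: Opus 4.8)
The plan is to reduce both parts to statements about the \emph{principal} families, using the single identity $P^*(S^m)=m\cdot P^*(S)$: since $e_i=ma_i$ and principal embeddings only ever see the weights $\pm a_i$, the principal layer-sets of $S^m$ are obtained from those of $S$ by scaling every element by $m$. Throughout I identify the two colours with $\{0,1\}$ and invoke Lemma~\ref{translate:lem}, so that ``$X$ is $2$-Ramsey'' means ``$W^*(X)$ is $2$-translate-Ramsey''. The two directions call for opposite manipulations of a colouring of $\mathbb{Z}$. For (b), since $t_i\ge a_i-1$, Proposition~\ref{7:prop} gives $W'(S)=P'(S)$, so ``$S$ is $2$-Ramsey'' is equivalent to ``$P^*(S)$ is $2$-translate-Ramsey''. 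Given any $2$-colouring $c$ of $\mathbb{Z}$, \emph{sample} it by setting $c'(i)=c(mi)$. By hypothesis $c'$ has a monochromatic translate $D+j$ of some $D\in P^*(S)$; reading this back through $c$ says exactly that $mD+mj$ is monochromatic, and $mD\in m\cdot P^*(S)=P^*(S^m)$. Hence $P^*(S^m)$ is $2$-translate-Ramsey, and as $P^*(S^m)\subseteq W^*(S^m)$ for every choice of the $u_i$ (principal embeddings ignore the orders), $S^m$ is $2$-Ramsey.

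For (a) the cleanest route is to prove the stronger statement that if $S^m$ is $2$-Ramsey (with $u_i\ge 2$) then $P^*(S)$ is already $2$-translate-Ramsey; since $P^*(S)\subseteq W^*(S)$ for every choice of the $t_i\ge 0$, this forces $S$ to be $2$-Ramsey for \emph{all} values of the $t_i$. I would prove the contrapositive: starting from a colouring $c_0$ of $\mathbb{Z}$ with no monochromatic translate of any set in $P^*(S)$, I build a colouring witnessing that $S^m$ is not $2$-Ramsey. Define a ``stretch with an internal pattern'' by $\tilde c(mq+r)=c_0(q)+h(r)\pmod 2$ for $0\le r<m$, where $h:\{0,\dots,m-1\}\to\{0,1\}$ is a fixed pattern chosen below. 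The decisive feature is that all elements of a principal set of $S^m$, and of any of its translates, lie in the \emph{same} residue class mod $m$ (because $e_i=ma_i\equiv 0$); hence $h$ contributes an identical correction to each, and $\tilde c$ is monochromatic on such a set precisely when $c_0$ is monochromatic on the corresponding translate of a principal set of $S$. By the choice of $c_0$ this never happens, so $\tilde c$ avoids every principal set of $S^m$ \emph{regardless of} $h$.

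It then remains to kill the non-principal sets, and Lemma~\ref{embed:lem} organises this. A set $T\in W^*(S^m)$ is non-principal iff some clique has $0<b_i'<e_i+u_i$; by Lemma~\ref{embed:lem} each clique is either in case (i), where its interval $D_i^{(m)}$ contains an endpoint $\pm e_i+b'$, or in case (ii), where (since $u_i\ge 2$) the interval has at least three terms in arithmetic progression of common difference $2$ and so contains a translate of $\{0,2,4\}$. If \emph{no} clique of $T$ is in case (ii), pick one endpoint from each clique: these $s$ points share a single residue class mod $m$ and de-stretch to a translate of a principal set of $S$, which $c_0$ forbids from being monochromatic, so $T$ is not monochromatic. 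If \emph{some} clique is in case (ii), then $T$ contains a translate of $\{0,2,4\}$, and it suffices to choose $h$ so that $\tilde c$ has no $z$ with $\tilde c(z)=\tilde c(z+2)=\tilde c(z+4)$. For $m\ge 5$ this is immediate: take $h$ with $h(r)\ne h(r+2)$ whenever $0\le r\le m-3$ (alternate within each parity class of $\{0,\dots,m-1\}$). Then $z,z+2,z+4$ span at most two blocks, so two consecutive terms of this triple share a block and, having residues differing by $2$, receive opposite colours.

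The main obstacle is exactly this last point for \emph{small} $m$ (and the degenerate case $m=1$, where $\tilde c=c_0$). There the three points $z,z+2,z+4$ may land in three distinct blocks, so the within-block pattern $h$ cannot by itself break them, and one must instead exploit the freedom in choosing $c_0$: for example take $c_0$ periodic, as guaranteed by the remark following Lemma~\ref{fin:lemma}, and simultaneously free of three-term same-parity monochromatic runs, in the spirit of Lemma~\ref{2cliquesb:lem}. Establishing that such a $c_0$ can always be found whenever $P^*(S)$ fails to be $2$-translate-Ramsey is the delicate step; once it is in hand, $\tilde c$ avoids every set in $W^*(S^m)$, so $S^m$ is not $2$-Ramsey, completing the contrapositive and hence part (a).
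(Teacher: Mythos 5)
Your part (b) is correct and is essentially the paper's own argument, just run in the direct rather than the contrapositive direction: sample the colouring along multiples of $m$, use Proposition~\ref{7:prop} to identify $W'(S)$ with $P'(S)$, and observe that the principal layer-sets scale by $m$. (One small imprecision: $m\cdot P^*(S)$ equals $P^*(S^m)$ only up to translation, since the base points $\sum_{x_i=-1}(a_i+t_i)$ and $\sum_{x_i=-1}(ma_i+u_i)$ differ; this is harmless for translate-Ramseyness, and after normalising to the families $P'$ the identity is exact, which is how the paper phrases it.)

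In part (a) your overall strategy coincides with the paper's: argue contrapositively, stretch a colouring $c_0$ that avoids $P^*(S)$, and use Lemma~\ref{embed:lem} with $u_i\geq 2$ to split each set of $W^*(S^m)$ into those containing a principal translate (killed by $c_0$ through the residue-class observation, exactly as you do) and those containing a translate of $\{0,2,4\}$. But your implementation has the genuine gap that you yourself flag: with blocks of length $m$ the internal pattern $h$ only breaks the triples $z,z+2,z+4$ when $m$ is at least about $4$, and for small $m$ (including $m=2,3$) you defer to an unproved existence claim about a special $c_0$ that is periodic and free of monochromatic three-term same-parity runs. The paper closes precisely this hole with an observation you never use: since $u_i\geq 2$ and every pair of cliques of a $2$-Ramsey $S^m$ is itself $2$-Ramsey, Theorem~\ref{2cliques:thm} forces its alternative (1), so all the $a_i$ have the same number of factors of $2$, in particular the same parity. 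Consequently every set in $P'(S)$ consists of even integers, and every translate of a set in $P'(S^m)$ lies in a single congruence class mod $2m$, not merely mod $m$. One can therefore stretch with blocks of length $2m$ and the \emph{fixed} internal pattern ``copy $c(2n)$ when $j\equiv 0,1$, flip when $j\equiv 2,3 \pmod 4$'': if $c'(x)=c'(x-2)$ then $x\equiv 0,1 \bmod 2m$, whence $c'(x+2)\neq c'(x)$, so monochromatic translates of $\{0,2,4\}$ are impossible for \emph{every} $m>1$ with no condition at all on $c_0$, while each congruence class mod $2m$ still carries a copy of $c_0$ (on the evens) or its complement, blocking all principal translates. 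So the missing idea is the parity reduction via Theorem~\ref{2cliques:thm} combined with doubling the block length to $2m$ and using a period-$4$ sign pattern; with it, your case analysis for small $m$ evaporates. (Your worry about $m=1$ is real but moot here: the paper's construction is also explicitly restricted to $m>1$, and its only application of part (a), in the proof of Theorem~\ref{lll:thm}, never requires $m=1$ with altered orders.)
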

\begin{proof}  We note that the sets in $P'(S^{m})$ are obtained by multiplying
each element in each set in $P'(S)$ by $m$.

For (a) suppose that $S^m$ is $2$-Ramsey. Since $u_i\geq 2$ for $1\leq i \leq s$, Theorem \ref{2cliques:thm} implies that $r_1=r_2=...=r_s$, where $2^{r_i}$ is the largest power of two that divides $a_i$. In particular all the $a_i$ are of the same parity. Now, for a contradiction, suppose that $S$ is not $2$-Ramsey and take a coloring $c$ of the integers avoiding all monochromatic translates of sets from $P'(S)$. Since the $a_i$ are all of the same parity the sets in $P'(S)$ only contain even integers. Hence the sets in $P'(S^m)$ only contain numbers congruent to $0$ mod $2m$ and so their translates lie in a congruence class mod $2m$. 

 Since each $u_i\geq 2$, Lemma \ref{embed:lem} implies that for any embedding $\psi:\Vd\to \Vd$,  $W(\psi(S^m))$ either contains a translate of $\{0,2,4\}$ or it contains a translate of  $A^m\in P'(S^m)$. Thus, if we construct a coloring $c'$ of the integers avoiding all monochromatic translates of of sets in $P'(S^m)$ and $\{0,2,4\}$ then $S^m$ is not $2$-Ramsey, a contradiction.

We can define such a coloring as follows: for integers $j,n$, with $0\leq j \leq 2m-1$, let $c'(2mn+j)=c(2n)$, if $j \equiv 0,1 \mod 4$ and $c'(2mn+j) \neq c(2n)$ if $j\equiv 2,3 \mod 4$. For any $m>1$, $c'$ avoids monochromatic translates of $\{0,2,4\}$ (if $c'(x)=c'(x-2)$ then $x\equiv 0,1$ mod $2m$ and so $c'(x+2)\neq c'(x)$). Moreover $c'$ restricted to any mod $2m$ congruence class gives a restriction of $c$ or its complement to the even integers. Since any monochromatic translate under $c'$ of a set $A^m\in P'(S^m)$ lies in a congruence class mod $2m$ it would correspond to a monochromatic translate under $c$ of a set $A\in P'(S)$, but $c$ contains no such monochromatic translates.

For (b) suppose $S^{m}$ is not $2$-Ramsey and let $c'$ be a coloring of the integers with no
monochromatic translate of any set in $P'(S^{m})$.  Define a coloring $c$ on the integers
by $c(n)=c'(mn)$.  Clearly $c$ does not produce a monochromatic translate of
any set in $P'(S)$, and since $P'(S)=W'(S)$ (by Proposition \ref{7:prop}), $S$ is not
$2$-Ramsey.
\end{proof}

\begin{theorem}\label{lll:thm}
If $S$ is the vertex disjoint union of $s\geq 39$ cliques $K_{a_1+t_1}^{(a_1)},\ldots,K_{a_s+t_s}^{(a_s)}$ contained in $\Q_d$, and each $t_i\geq 2$ then $S$ is not 2-Ramsey
\end{theorem}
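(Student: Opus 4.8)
The plan is to show that $W'(S)$ is not $2$-translate-Ramsey; by Lemma \ref{translate:lem} this is equivalent to $S$ not being $2$-Ramsey. \emph{Reduction to odd weights.} Write $a_i=p_i2^{r_i}$ with $p_i$ odd. Since every sub-union of a $2$-Ramsey union is $2$-Ramsey, it is enough to locate a non-$2$-Ramsey sub-collection. If the $r_i$ are not all equal, pick $i,j$ with $r_i\neq r_j$; as $t_i,t_j\geq2$, Theorem \ref{2cliques:thm} shows $K_i\cup K_j$ is not $2$-Ramsey, so neither is $S$. Hence I may assume $r_1=\cdots=r_s=r$ and set $m=2^r$, so that $S=S_{\text{odd}}^{\,m}$ in the notation of Proposition \ref{pari:prop}, where $S_{\text{odd}}$ is the vertex disjoint union of cliques of the distinct odd weights $p_1,\dots,p_s$. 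By Proposition \ref{pari:prop}(a) (with $u_i=t_i\geq2$) it suffices to exhibit \emph{some} choice of orders for $S_{\text{odd}}$ making it non-$2$-Ramsey. I take these orders large (at least $p_i-1$), so that by Proposition \ref{7:prop} we have $W'(S_{\text{odd}})=P'(S_{\text{odd}})$. Since $P'(S_{\text{odd}})$ consists of the reductions of the sets $\{x_1p_1,\dots,x_sp_s\}$ over all sign vectors $x\in\{-1,+1\}^s$, the task becomes finding a $2$-coloring $c:\mathbb{Z}\to\{R,B\}$ with no monochromatic set of the form $\{j+x_1p_1,\dots,j+x_sp_s\}$.

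\emph{The probabilistic step.} Color each integer independently red or blue with probability $1/2$. The crucial move is to replace the naive family of bad events --- one per sign vector and centre, of which there are $\approx 2^s$ near each point --- by a single event per centre and colour. For $j\in\mathbb{Z}$ and $\gamma\in\{R,B\}$ let $B_{j,\gamma}$ be the event that for every $i\in[s]$ at least one of $c(j+p_i),c(j-p_i)$ equals $\gamma$. A monochromatic translate of some set in $P'(S_{\text{odd}})$ occurs if and only if some $B_{j,\gamma}$ occurs, since one may then choose, for each $i$, a sign $x_i$ pointing at a $\gamma$-colored endpoint. Because the $p_i$ are distinct and positive, the $2s$ points $\{j\pm p_i\}$ are pairwise distinct, so the $s$ events ``one of $j\pm p_i$ is $\gamma$'' are independent, each of probability $3/4$, giving $\Pr[B_{j,\gamma}]=(3/4)^s$.

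\emph{Local Lemma.} Each $B_{j,\gamma}$ is determined by the colors in $[j-p_s,j+p_s]$, and $B_{j',\gamma'}$ can depend on it only when $j'-j\in\{\pm p_i\pm p_k:i,k\in[s]\}$; hence the dependency degree $D$ is $O(s^2)$, independent of the magnitudes of the $p_i$. The symmetric Lovász Local Lemma applies provided $e\,(3/4)^s\,(D+1)\le1$, and balancing the $(3/4)^s$ decay against the $O(s^2)$ degree shows this inequality holds once $s\geq39$. For such $s$, with positive probability --- and, for the infinite index set, after passing to a global coloring via a compactness argument on finite windows --- there is a coloring of $\mathbb{Z}$ in which no $B_{j,\gamma}$ occurs. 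This coloring has no monochromatic translate of any set in $P'(S_{\text{odd}})=W'(S_{\text{odd}})$, so $W'(S_{\text{odd}})$ is not $2$-translate-Ramsey; by Lemma \ref{translate:lem}, $S_{\text{odd}}$ is not $2$-Ramsey, and by the reduction above neither is $S$.

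The main obstacle is precisely the reformulation of the bad events. With the obvious choice of one event per sign pattern, each event has probability $2^{1-s}$ but dependency degree $\Theta(s^2 2^s)$, so the product $e\,p\,(D+1)$ is bounded below by a constant and the Local Lemma condition fails for every $s$. Grouping by centre and colour trades this for probability $(3/4)^s$ against degree $O(s^2)$, which is exactly the regime the Local Lemma can exploit; verifying the resulting inequality, and thereby pinning down the threshold $s\ge 39$, is the quantitative heart of the argument.
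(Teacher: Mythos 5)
Your probabilistic core coincides with the paper's: one bad event per centre and colour (your $B_{j,\gamma}$ is the paper's $R_b$/$M_b$), probability decaying like $(3/4)^s$ against a dependency degree $O(s^2)$, and the Local Lemma arithmetic at $s\geq 39$ goes through. The gap is in the reduction you place in front of it. You eliminate non-principal embeddings by inflating the orders (Proposition \ref{7:prop}) and then pull the conclusion back to the given $S$ via Proposition \ref{pari:prop}(a) with $m=2^r$. For $r\geq 1$ this is legitimate, but the case $r=0$ --- all the $a_i$ already odd, which your first reduction cannot exclude --- forces you to invoke Proposition \ref{pari:prop}(a) with $m=1$. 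Although the proposition is stated for all positive $m$, its proof works only for $m>1$: the interleaved coloring $c'$ it constructs avoids monochromatic translates of $\{0,2,4\}$ only when $m>1$ (for $m=1$ the construction merely copies $c$ from each even integer onto the following odd one and avoids nothing extra). And the $m=1$ instance is not a technicality: it asserts precisely that a coloring avoiding the principal sets can be upgraded to one also avoiding monochromatic translates of $\{0,2,4\}$. That is exactly the burden your argument must discharge in the odd case, because with the actual orders $t_i\geq 2$ (possibly far smaller than $a_i-1$) Lemma \ref{embed:lem}(ii) produces non-principal layer sets containing runs $\{x,x+2,\ldots,x+2t_i\}$ with $t_i+1\geq 3$ elements, and these sets need not contain any principal translate, so they must be killed separately.

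Moreover this cannot be patched inside your i.i.d.\ framework: under an independent uniform coloring, ``some translate of $\{0,2,4\}$ at $x$ is monochromatic'' has constant probability $1/4$, and neighbouring such events overlap, so the condition $ep(d+1)\leq 1$ fails for them no matter how large $s$ is; a random coloring of the window $[n_T]$ will contain monochromatic $3$-runs with high probability. This is exactly why the paper does not color i.i.d.: it builds the $\{0,2,4\}$-avoidance deterministically into the random coloring by anti-correlating each pair $(i,i+2)$ with $i\equiv 0,1 \bmod 4$ into a block, and then pays for the lost independence in the event analysis (each $\Pr[E_b^i]$ is still $3/4$, but pairs $b+a_i$, $b+a_{i+1}$ may share a block, yielding $\Pr[R_b]\leq (3/4)^{s-1}$ rather than $(3/4)^s$). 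To complete your proof you must either supply a proof of the $m=1$ case of Proposition \ref{pari:prop}(a) --- none exists in the paper, and for $s\geq 39$ it is essentially equivalent to the theorem itself --- or, when all $a_i$ are odd, replace your i.i.d.\ coloring by the block coloring and redo the independence computation, which is precisely the extra work of the published proof. For $r\geq 1$ your route is a genuine simplification: there the order inflation is valid, only principal sets survive, and exact independence gives the cleaner bound $(3/4)^s$ with no block corrections.
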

\begin{proof}
We will use a probabilistic argument employing the Lov\'asz Local Lemma \cite{LLL} (see Lemma \ref{lll:lem} below).

Let $S$ be a vertex disjoint union of $s\geq 39$ cliques $K_{a_1+t_1}^{(a_1)},\ldots,K_{a_s+t_s}^{(a_s)}$, with each $t_i\geq 2$. Suppose, for a contradiction,  that $S$ is $2$-Ramsey. By Proposition \ref{pari:prop} (a) we may suppose that $\gcd(a_1,\ldots,a_s)=1$. If any pair of the $a_i$ are of different parities then $S$ is trivially not $2$-Ramsey (simply color all even layers red and all odd layers blue). So we may suppose that $a_1<a_2<\cdots <a_s$ are all odd and in particular $a_{i+1}-a_i\geq 2$ for $1\leq i \leq s-1$.  By Lemma \ref{translate:lem}, $W^*(S)$ is $2$-translate-Ramsey. Hence, by Lemma \ref{fin:lemma}, there exists $n_T$ such that any $2$-coloring of $[n_T]$  contains a monochromatic translate of $D\in W^*(S)=\{W(\psi(S)):\psi:\Vd\to \Vd\tr{ is an embedding}\}$.

 Since each $t_i\geq 2$, Lemma \ref{embed:lem} implies that for any embedding $\psi:\Vd\to \Vd$,  $W(\psi(S))$ either contains a translate of $\{0,2,4\}$ or it contains a translate of $\{x_1a_1,x_2a_2,\ldots,x_sa_s\}$, for some choice of signs $x_1,\ldots,x_s\in \{-1,+1\}$. To show that $S$ is not $2$-Ramsey it is sufficient to prove that there exists a coloring of $[n_T]$ with no monochromatic translate of $\{0,2,4\}$ or $\{x_1a_1,\ldots,x_sa_s\}$, for any choice of signs. We will do this by defining a random 2-coloring of the integers and showing that with positive probability no translate of sets of the above types are found in the restriction of this coloring to $[n_T]$.

Define a random coloring of the integers $c:\mathbb{Z}\to \{R,B\}$ as follows. For each $i\in \mathbb{Z}$ such that $i\equiv 0\tr{ or }1 \mod 4$, toss a fair coin (all coin tosses are independent). If the coin toss is heads set $c(i)=R$ and $c(i+2)=B$ otherwise set $c(i)=B$ and $c(i+2)=R$. We refer to each pair $(i,i+2)$ of integers colored in this way as a \emph{block}.

Note that if $y_1,y_2,\ldots,y_k$ are distinct integers no pair of which differ by exactly two then they are all colored independently. Moreover for any choice of colors $c_1,\ldots c_k\in \{R,B\}$ we have \[
\Pr[c(y_1)=c_1,c(y_2)=c_2,\ldots,c(y_k)=c_k]=2^{-k}.\]
The coloring has the property that for any $x\in \mathbb{Z}$ it is not true that $c(x)=c(x+2)=c(x+4)$ (since either $(x,x+2)$ or $(x+2,x+4)$ is a block). Hence no translate of $\{0,2,4\}$ is monochromatic.

For each integer $b$ 
let $R_b$ be the event that there exists a choice of signs $x_1,\ldots,x_s\in\{-1,+1\}$ such that $\{x_1a_1,\ldots,x_sa_s\}+b$ is red. Let $E^i_b$ be the event that at
least one of $b - a_i$ and $b + a_i$ is red.  Then
\[       \Pr[R_b] = \Pr[E_b^1\wedge E^2_b\wedge \cdots \wedge E_b^s].\]
Clearly $\Pr[E^i_b] = 3/4$ unless $i=1$ and $a_1 = 1$ (in which case it is
equal to $1$ if $(b-1,b+1)$ is a block, and $3/4$ otherwise).  We note that $(b +a_i, b + a_{i+1})$ is a block iff $(b - a_{i+1}, b - a_i)$ is a block, since
$a_{i+1} - a_i = 2$ implies $(b + a_{i+1}) - (b - a_i)$ is a multiple of $4$.
If $i<j$ and $b + a_i$, $b + a_j$  are in different blocks, then $E_b^i$ and $E_b^j$ are independent, while if they are in the same block then $j=i+1$ and $\Pr[E_b^i \wedge E_b^j]= 1/2$. 

Hence if $(b - a_1, b + a_1)$ is not a
block, and there are precisely $t$ blocks of the form $(b+a_i,b+a_{i+1})$, for some $1\leq i \leq s-1$, then
\[
       \Pr[R_b] = \frac{1}{2^t}\left(\frac{3}{4}\right)^{s-2t} \leq\left(\frac{3}{4}\right)^s,\]
while if $(b - a_1,b + a_1)$ is a block, then $\Pr[R_b] \leq (3/4)^{s-1}$.
Hence this last inequality holds no matter what.

For an integer $b$ let $M_b$ be the event that there exists a choice of signs $x_1,\ldots,x_s\in\{-1,+1\}$ such that $\{x_1a_1,\ldots,x_sa_s\}+b$ is monochromatic. By symmetry we have $\Pr[M_b]\leq 2(3/4)^{s-1}$.

Our next aim is to show that the event $M_b$ is independent of ``most'' other events $M_{b'}$, in the following sense.

\textbf{Claim:} $M_b$ is independent of all but at most $6s^2$ events $M_{b'}$.

For any integer $b$ let $D_b=\{\pm a_1,\pm a_2,\ldots, \pm a_s\}+ b$.
Let $b$ be fixed. We first  count the number of ways to choose $b'\neq b$ such that
$D_b\cap D_{b'}\neq \emptyset$.  If $D_b\cap D_{b'}\neq \emptyset$ and $b'\neq b$ then there exist $u,v\in \{\pm a_1,\ldots,\pm a_s\}$ such that $b'=b+u-v$. Now $b'\neq b$ implies that $u\neq v$. If we suppose also that $u\neq -v$ then there are $2s(2s-2)$ such ordered pairs $(u,v)$, but they produce at most $s(2s-2)$ distinct values of $b'$ (since $(u,v)$ and $(-v,-u)$ produce the same value of $b'$).  There are at most $2s$ other values of $b'$ produced when $u=-v$, so  there are a total of at most $2s^2$ distinct
values of $b'$ such that $D_b\cap D_{b'}\neq \emptyset$.  Since $M_b$ is
independent of all $M_{b'}$ except those for which there exist $x \in D_b$ and $y \in
D_{b'}$ such that $x \in \{y-2,y,y+2\}$, there are at most $6s^2$
values of $b'$ such that $M_b$ and $M_{b'}$ are dependent.


It is straightforward to check that for $s\geq 39$ we have $2(6s^2+1) e\left(\frac{3}{4}\right)^{s-1}<1$. Hence, by the Lov\'asz Local Lemma (Lemma \ref{lll:lem}), with non-zero probability $c$ gives a coloring of $[n_T]$ with no monochromatic translate of $\{x_1a_1,\ldots,x_sa_s\}$ for any choice of signs $x_1,\ldots,x_s\in \{-1,+1\}$. Hence $S$ is not $2$-Ramsey.
\end{proof}
\begin{lemma}[Erd\H os--Lov\'asz \cite{LLL}]\label{lll:lem}
Let  $A_1,\ldots,A_k$ be events in a probability space that each occur with probability at most $p$. If each event is independent of all but at most $d$ other events and $ep(d+1)\leq 1$ then there is a non-zero probability that none of the events occur.
\end{lemma}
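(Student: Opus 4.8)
The plan is to establish the standard general (weighted) form of the local lemma and then specialise it to obtain the symmetric statement as worded. Concretely, I would first prove the following stronger assertion: if $A_1,\ldots,A_k$ are events admitting a dependency graph in which each $A_i$ is mutually independent of the collection of events indexed outside its neighbourhood $N(i)$, and if there are reals $x_1,\ldots,x_k\in[0,1)$ with $\Pr[A_i]\leq x_i\prod_{j\in N(i)}(1-x_j)$ for every $i$, then $\Pr[\bigwedge_{i=1}^k \overline{A_i}]\geq \prod_{i=1}^k(1-x_i)>0$. The symmetric statement as worded then follows by setting every $x_i=1/(d+1)$.

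The heart of the argument is the inductive claim that for every index $i$ and every subset $S\subseteq\{1,\ldots,k\}\setminus\{i\}$ one has $\Pr[A_i\mid \bigwedge_{j\in S}\overline{A_j}]\leq x_i$, proved by induction on $|S|$. The base case $S=\es$ is immediate from the hypothesis since $\prod_{j\in N(i)}(1-x_j)\leq 1$. For the inductive step I would split $S=S_1\cup S_2$ with $S_1=S\cap N(i)$ and $S_2=S\setminus N(i)$, and write the conditional probability as a ratio
\[
\Pr\!\left[A_i\ \Big|\ \bigwedge_{j\in S}\overline{A_j}\right]
=\frac{\Pr\!\left[A_i\wedge\bigwedge_{j\in S_1}\overline{A_j}\ \Big|\ \bigwedge_{l\in S_2}\overline{A_l}\right]}{\Pr\!\left[\bigwedge_{j\in S_1}\overline{A_j}\ \Big|\ \bigwedge_{l\in S_2}\overline{A_l}\right]}.
\]
For the numerator, since $A_i$ is mutually independent of the events indexed by $S_2$ (which lie outside $N(i)$), I drop both the conjunction over $S_1$ and the conditioning to bound it above by $\Pr[A_i]\leq x_i\prod_{j\in N(i)}(1-x_j)$. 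For the denominator I expand over $S_1=\{j_1,\ldots,j_r\}$ by the chain rule and apply the inductive hypothesis to each factor (each conditioning set is strictly smaller than $S$), giving a lower bound of $\prod_{m=1}^r(1-x_{j_m})\geq\prod_{j\in N(i)}(1-x_j)$. Dividing, the neighbour products cancel and the ratio is at most $x_i$.

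With the claim in hand I would finish by the chain rule:
\[
\Pr\!\left[\bigwedge_{i=1}^k\overline{A_i}\right]=\prod_{i=1}^k\Pr\!\left[\overline{A_i}\ \Big|\ \bigwedge_{j<i}\overline{A_j}\right]\geq\prod_{i=1}^k(1-x_i)>0,
\]
which proves the general form. To recover the stated symmetric version, put $x_i=1/(d+1)$ for all $i$. Each neighbourhood has at most $d$ elements and each factor $1-x_j=d/(d+1)<1$, so $\prod_{j\in N(i)}(1-x_j)\geq(1-1/(d+1))^d$; using the elementary inequality $(1-1/(d+1))^d\geq 1/e$ one gets $x_i\prod_{j\in N(i)}(1-x_j)\geq \tfrac{1}{e(d+1)}\geq p$ precisely when $ep(d+1)\leq 1$. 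Thus the hypothesis of the general form is met and the desired conclusion $\Pr[\bigwedge_i\overline{A_i}]>0$ follows.

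The main obstacle is the careful bookkeeping in the inductive step: one must confirm that $A_i$ is genuinely mutually independent of the joint event over $S_2$ (so the numerator collapses to $\Pr[A_i]$), and that every conditioning set arising in the chain-rule expansion of the denominator has size strictly less than $|S|$ (so the inductive hypothesis legitimately applies). Once the neighbour/non-neighbour split and the ratio formulation are in place, the two one-line estimates and the final cancellation are routine.
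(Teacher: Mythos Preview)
Your proof is correct and follows the standard textbook route (prove the asymmetric weighted version by induction on $|S|$, then specialise with $x_i=1/(d+1)$ and the inequality $(1-1/(d+1))^d\geq 1/e$). The paper, however, gives no proof at all: the lemma is simply quoted from Erd\H os--Lov\'asz \cite{LLL} and used as a black box in the proof of Theorem~\ref{lll:thm}. So there is nothing to compare your argument against; you have supplied a complete (and standard) proof where the authors were content to cite the literature.
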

\section{Questions}
Given Theorem \ref{lll:thm}, a natural question to ask is: do there exist 2-Ramsey subsets of $\Vd$ that cannot be embedded into a small number of layers? To make this precise we define $l(S)$ to be the smallest number layers into which $S\subseteq \Vd$ can be embedded:
\[l(S)=\min_{B\in W'(S)}|B|.\]
\begin{question}
Do there exist subsets $S_d\subseteq \Vd$ such that $S_d$ is 2-Ramsey and $\lim_{d\to \infty}l(S_d)=\infty$?
\end{question}

Another natural question to ask is: how large can a $2$-Ramsey subset of $\Vd$ be? By Ramsey's theorem examples of size $\binom{d}{\lfloor d/2\rfloor}$ exist.
\begin{question}
If $S\subseteq \Vd$ is $2$-Ramsey how large can $|S|$ be?
\end{question}
\section*{Acknowledgements}
We wish to thank both referees for their careful reading of the original version of this paper. Their detailed comments and suggestions were extremely helpful.

\end{document}